\newtheorem{theorem}{Theorem}
\newtheorem{corollary}[theorem]{Corollary}
\newtheorem{example}[theorem]{Example}
\newtheorem{lemma}[theorem]{Lemma}
\newtheorem{proposition}[theorem]{Proposition}
\newtheorem{remark}[theorem]{Remark}
\newenvironment{proof}[1][Proof]{\noindent\textbf{#1.} }{\ \rule{0.5em}{0.5em}}
\begin{document}

\title{Invariant cones for semigroups and controllability of bilinear control systems}
\author{Emerson V. Castelani and Jo\~{a}o A. N. Cossich \and Alexandre J. Santana and Eduardo C. Viscovini\\Departamento de Matem\'{a}tica, Universidade Estadual de Maring\'{a}\\Maring\'{a}, Brazil}
\maketitle

\maketitle
\begin{abstract}
	In this paper we present necessary and sufficient conditions to guarantee the existence of invariant cones, for semigroup actions, in the space of the  $k$-fold exterior product. As  consequence we establish a necessary and sufficient condition  for controllability of a class of  bilinear control systems.
\end{abstract}

\noindent {}\noindent \textit{AMS 2020 subject classification}: 22E46, 93B05, 20M20 
\newline
\noindent \textit{Key words:} \textit{Simple Lie groups, Controllability, bilinear systems, grassmannnians,
	invariant cones, semigroups, invariant control sets.}

\section{Introduction}
In this paper we deal with invariant cones for semigroup actions and  applications to study controllability of control systems. In our context this question is related with the flag type of the semigroup (in particular semigroup of the control system) and hence with the control sets of the semigroup (or of the control system). Note that it  is far from achieving global results on controllability of bilinear control systems, that is, to find sufficient conditions for controllability is a long term and still incomplete area of research (see e.g. Elliot \cite{Ell}). But, in the last few decades, several papers have been published showing that the Lie theory, especially the theory of semigroups of semisimple Lie groups, provides important tools to study controllability (see e.g. Do Rocio, San Martin and Santana \cite{RSS}, Do Rocio, Santana and Verdi \cite{RoSaVe}, Dos Santos and San Martin \cite{smals} and San Martin \cite{sm}). The semigroup  appears naturally in the context of control systems, in fact, given a bilinear control system 
\begin{equation}
\dot{x}=Ax+uBx,x\in \mathbb{R}^{d}\setminus \{0\},u\in \mathbb{R},
\label{forbilinear}
\end{equation}
where $A$ and $B$ are $d\times d$-matrices, we have that the semigroup $S$ of the system is given by the concatenations of  solutions:
\[S=\{e^{t_{k}(A+u_{k}B)}e^{t_{k-1}(A+u_{k-1}B)} \ldots e^{t_{1}(A+u_{1}B)}, t_{i}\geq 0 , k \in {\Bbb N}  \}   \]
and the group system has a similar definition just changing the positive times  $t_{i}$ by real times  (see e.g Colonius and Kliemann \cite{CoKli} and Elliot \cite{Ell}). And if we consider $A$ and $B$ generating a semisimple Lie algebra $\mathfrak{g}$ we have the possibility to use the semisimple Lie theory to study controllability of the system, for example  in case of  $\mathfrak{g}= \mathfrak{sl}(d,\mathbb{R})$ we have that  this system is controllable in ${\Bbb R}^{d} \setminus \{ 0 \}$ ($Sx= {\Bbb R}^{d} \setminus \{ 0 \}$ for all $x \in {\Bbb R}^{d} \setminus \{ 0 \}$) if and only if $S={\rm Sl}(d,\mathbb{R})$ (see \cite{RSS} and \cite{smt}). 

One of the most interesting ways to prove that the above system is not  controllable is to show the existence of  some $S$-invariant proper subset of ${\Bbb R}^{d}$, a trap of the system. This problem was addressed in \cite{YSac}, by Sachkov,  but in \cite{RSS} the authors  searched these invariant
sets among the convex  cones, since if a set $C$ is invariant by the system then the convex closure of $C$ is also invariant. In our work we follow similar approach to improve and generalize the results contained in \cite{RSS} and in particular to give a necessary and sufficient condition for controllability of the above system when $A, B \in {\frak sl}(d,{\Bbb R})$. More specifically, we prove that the system is controllable if and only if it does not have an invariant proper cone in the $k$-fold exterior product of ${\Bbb R}^{d}$, $\bigwedge ^{k}{\Bbb R}^{d}$, for all $k \in \{1, \ldots , d-1 \}$. In fact, this  is a consequence of our following transitivity result: 	Let $S\subset {\rm Sl}(d,\mathbb{R})$ be a connected semigroup with nonempty interior. Then $S={\rm Sl}(d,\mathbb{R})$ if and only if there are no   $S$-invariant and proper cones in  $\bigwedge^k\mathbb{R}^d$, for all $k\in\{1,\cdots,d-1\}$. These two results are   built from the theory of   flag type of a semigroup.

We briefly recall the main concept or tool of this paper.  Consider $S\subset \mathrm{Sl}(d,\mathbb{R})$  a
semigroup with nonempty interior. Denote by  $\mathbb{F}_{\Theta }$ the flag manifold of all flags 
$ (V_{1}\subset \cdots \subset V_{k})  $
of subspaces $V_{i}\subset \mathbb{R}^{d}$ with $\dim V_{i}=r_{i}$, 
$i=1,\ldots ,k$ and $\Theta = \{r_{1}, \ldots , r_{k} \}$.  Take the canonical projection   $\pi _{\Theta _{1}}^{\Theta }:\mathbb{F}_{\Theta }\rightarrow \mathbb{F}%
_{\Theta _{1}}$  with ${\Theta _{1}} \subset {\Theta}$ and denote by $\mathbb{F}$ the full flag manifold with the sequence ${\Theta}_{M} =\{1,2,\ldots ,d-1\}$. There is a natural (transitive) action of $\mathrm{Sl}(d,\mathbb{R})$ in these flag manifolds, then  an invariant control set, in $\mathbb{F}_{\Theta }$, for the $S$-action   is
a subset $C \subset \mathbb{F}_{\Theta }$ such that $\mathrm{cl}\left(
Sx\right) =C$,  for all $x\in C$,  and $C$ is maximal with this property. Recall that an invariant control set is closed and its interior is dense on it. One important result is that in each flag manifold $\mathbb{F}_{\Theta }$
there exists just one $S$-invariant control set. Moreover, there exist $\Theta \subset {\Theta}_{M}$ such that ${\pi_{\Theta}^{-1}}(C_{\Theta})=C$ where ${\pi}_{\Theta}: \mathbb{F} \rightarrow {\mathbb{F}}_{\Theta}$ is the canonical projection, and $C_{\Theta}, C$ are the invariant control sets in $\mathbb{F} , {\mathbb{F}}_{\Theta}$ respectively. In addition, among 
 these flag manifolds there
is exactly one, denoted  by $\mathbb{F}_{\Theta \left( S\right) }$, which is
minimal (see \cite{sm}). The flag manifold $\mathbb{F}_{\Theta \left( S\right) }$  (or $\Theta \left( S\right) $) is called the  flag (or parabolic) type of $S$ (for details see San Martin \cite{flagType} and San Martin and Tonelli \cite{smt}). We note that once we know the invariant control set $C_{\Theta \left(
	S\right) }$ in the flag type $\mathbb{F}_{\Theta \left( S\right) }$
then every invariant control set is described because for any $\Theta $ we have 
$C_{\Theta }=\pi _{\Theta }\left( C\right) $ and $C=\pi _{\Theta \left(
	S\right) }^{-1}\left( C_{\Theta \left( S\right) }\right) $.
 Given $\Theta =\{r_{1},\ldots
,r_{n}\}$ with $0<r_{1} < \cdots <  r_{n}<d$ define $\Theta
^{*}=\{d-r_{n},\ldots ,d-r_{1}\}$. The flag manifold $\mathbb{F}_{\Theta^{*}}$ is said to be dual of $\mathbb{F}_{\Theta }$. With this we have that the flag type of $S^{-1}$ is given by the flag manifold $\mathbb{F}_{\Theta \left( S\right)^{*}}$ dual to the flag type of $S$ (see \cite{smax}).

From this  semigroup theoretical development, considering $S$ a connected semigroup with nonempty interior and taking $\Theta (S)$ its flag type,  we prove our main result: there exists a non-trivial $S$-invariant cone $W \subset \bigwedge ^{k}{\Bbb R}^{d}$
if and only if  $k \in \Theta (S)$. Hence, as a consequence we show the controllability and transitivity results mentioned above.

About the structure of this paper, in  Section 2 we establish the main concepts necessary for the next sections. In the third section we study the invariance of cones in  $\bigwedge ^{k}{\Bbb R}^{d}$. In Section 4 we have the main results of this paper, in special we present necessary and sufficient conditions for the existence of these invariant cones. In Section 5 we extend the previous results to convex set instead of cones. And finally, in the last section we present some examples where the fundamental conditions are verified using a computational implementation created by the authors. 

\section{Preliminaries}

In this section we collect some concepts and facts about flag manifolds related with the $\mathrm{Sl}(d,\mathbb{R})$ actions. In particular we are interested  in some  dynamical properties originating from the action of semigroups  $S \subset \mathrm{Sl}(d,\mathbb{R})$ on flag manifolds (for details see San Martin \cite{SM} and \cite{SM2}).

Now we complement the previous introduction about flag manifolds and control sets. Recall that the flag manifolds $\mathbb{F}_{\Theta}$ are compact and the \textbf{minimal flag manifolds} are the Grassmannians $\mathbb{F}_\Theta=\mathbb{G}_{k}(d)$, where $\Theta=\{k\}$. A particular case, when $k=1$, is the projective space ${\Bbb P}^{d-1}=\mathbb{G}_1(d)$.

From now on, in this section we discuss the special case $\mathbb{G}_k(d)$, $1\leq k\leq d-1$. In this work  it is convenient represent    $\mathbb{G}_k(d)$ in the following algebraic way. Let $B_k(d)$ be the set of $d\times k$ matrices of rank $k$. Define in $B_k(d)$ the following equivalence relation: $p\sim q$ if exists $a\in {\rm Gl}(k,\mathbb{R})$ with $q=pa$. In other words, $p\sim q$ if, and only if, the columns of $p$ and $q$ generate the same subspace of $\mathbb{R}^d$. Then we can see $\mathbb{G}_k(d)$ as $B_k(d)/\sim$. Denote the elements of $\mathbb{G}_k(d)$ by $[p]$. There is a natural action $\rho_k$ of the Lie group ${\rm Sl}(d,\mathbb{R})$ on $\mathbb{G}_k(d)$, which is given by $\rho_k(g,[p])=[gp]$.

Now take an arbitrary basis $\mathcal{B}$ of $\mathbb{R}^d$ and 
$N_\mathcal{B}$ the nilpotent group of lower triangular matrices (with respect to $\mathcal{B}$) with ones on the main diagonal. The decomposition of  $\mathbb{G}_k(d)$ into $N_\mathcal{B}$-orbits is called   \textbf{Bruhat decomposition}   of $\mathbb{G}_k(d)$,  moreover if we change the basis the decomposition also changes. There is just a finite number of these orbits,   $N_\mathcal{B}[p]$ with $[p]\in\mathbb{G}_k(d)$. It is well known that exists only one open and dense orbit, $N_\mathcal{B}[p_0]$, where $[p_{0}]$ is the subspace spanned by the first $k$ basic vectors (see \cite{SM}). We have that $N_\mathcal{B}[p_0]$ can be written as 
$$\left[
\begin{array}{c}
I_k \\
X
\end{array}
\right]$$
with $I_k$  the $k\times k$ identity and $X$  an arbitrary $(d-k)\times k$ matrix. Taking   
$$\eta= \left[
\begin{array}{cc}
A_1 & 0 \\
Y & A_2
\end{array}
\right] \in N_{\mathcal{B}}$$
with $A_1$ and $A_2$ invertible, it follows that
$$\rho_k(\eta,[p_{0}])=[\eta p_0]= \left[
\begin{array}{cc}
A_1 & 0 \\
Y & A_2
\end{array}
\right]
\left[
\begin{array}{c}
I_k \\
0
\end{array}
\right]=
\left[
\begin{array}{c}
A_1 \\
Y
\end{array}
\right]=
\left[
\begin{array}{c}
I_k \\
YA_1^{-1}
\end{array}
\right] .
$$
Note  that this orbit is  diffeomorphic to euclidean spaces.

Another important concept here is the \textbf{split regular} or just \textbf{regular}  element, that is the $h \in {\rm Sl}(d,\mathbb{R})$ with positive and distinct eigenvalues, where in some basis (denoted by $\mathcal{B}(h)$), $h={\rm diag} \{ \lambda_{1} , \ldots , \lambda_{d} \}$ with $\lambda_{1} > \cdots>\lambda_{d} > 0$.
Considering the action on $\mathbb{G}_k(d)$, the  fixed points for $h$   are the subspaces spanned by $k$ basic vectors. Moreover, these fixed points are hyperbolic and
 with respect to $\mathcal{B}(h)$, the stable manifolds are the
$N_{\mathcal{B}}$-orbits.
 One interesting dynamical property is that  the stable manifold of the subspace
$[p_{0}]$ is open and dense so that it is the unique 
attractor  for $h$  and $h^m[q]\rightarrow[p_{0}]$  for generic $[q]$. Now taking  $h^{-1}$  instead of $h$ and reverting the order of the basis, it follows that $h$  has also just one repeller, and it is  the subspace spanned by
the last $k$ basic vectors $\{ e_{d-k+1}, \ldots, e_{d} \}$.

We recall other  dynamical facts. Let $S \subset {\rm Sl}(d,\mathbb{R})$ be a semigroup with nonempty interior and denote by $\text{reg}(S)$ the set of regular elements in ${\rm int}S$. As before take $C_{k}$  the $S$-invariant control set  in  $\mathbb{G}_k(d)$, its  uniqueness implies that
$$C_{k}=\displaystyle\bigcap_{[p]\in \mathbb{G}_k(d)} {\rm cl}(S[p]).$$ According to the above comments, for $h\in\text{reg}(S)$ we have that $b_{\{k\}}(h)=[p_0]$ and $C_k\subset N_{\mathcal{B}(h)}[p_0]$ if $k\in\Theta(S)$. The  \textbf{set of transitivity} of an invariant control set  $C_{k}$ is  the set $C_{k}^0$ of the fixed points which are the attractors for  elements in $\text{reg}(S)$ (see \cite{sm}). Specifically,  we have that for any $[p] \in C_{k}^0$, there exists a basis $\mathcal{B}(h)=\{e_1,\ldots,e_d\}$ of $\mathbb{R}^{d}$ and $h={\rm diag} \{ \lambda_{1} , \ldots , \lambda_{d} \}$ with $\lambda_{1} > \cdots> \lambda_{d} > 0$ (in this basis), such that $h \in {\rm int}S$ 
and $[p]=\langle e_1 , \ldots , e_k\rangle$, i.e.,  $[p]$ is the attractor of $h$. From this fact it 
follows that the set of attractors of elements in  $\text{reg}(S)$ coincides with  $C_k^0$ and this set is dense in $C_k$. Hence $\text{reg}(S)$ is dense in ${\rm int}S$ and  $C_k$ is formed, in some sense, by   attractors for these regular elements. This is a kind of converse to the fact that  $[p]\in C_k$ if $[p]$ is the attractor of
a element  $h \in  \text{reg}(S)$. Therefore $C_{k}$ is contained in the open Bruhat component corresponding to $\mathcal{B}(h)$.
Another interesting result in this context is that $C_{k} =
 \mathbb{G}_k(d)$ for some $k$ if and only if $S$ is transitive on $\mathbb{G}_k(d)$. On the other hand, we have that if $S$ is a proper semigroup of ${\rm Sl}(d,\mathbb{R})$, then $C_{k} \neq  \mathbb{G}_k(d) $ for any  $k \in \{ 1, \ldots , d-1 \}$ and $S$ is not transitive on $\mathbb{G}_k(d) $ (see \cite{smt}). 
 
We finish this section recalling some necessary facts about tensorial product and Grassmanianns.

For $k\in\{1,\ldots,d\}$, denote by $\bigwedge^k\mathbb{R}^d$ the $k$-fold exterior product of $\mathbb{R}^d$ and let $\mathcal{F}_k(d)$ be the set of all $k$ multi-index $I=\{i_1,\ldots,i_k\}\subset\{1,\ldots,d\}$ with $1\leq i_1<\cdots<i_k\leq d$. It is well known that if we fix a basis $\mathcal{B}=\{e_1,\ldots,e_d\}$, then $\{e_I:=e_{i_1}\wedge\cdots\wedge e_{i_k}; I=\{i_1,\ldots,i_k\}\in\mathcal{F}_k(d)\}$ is a basis of $\bigwedge^k\mathbb{R}^d$. Along the text, we use the notation $\mathcal{D}$ to designate the set of all decomposable elements of $\bigwedge^k\mathbb{R}^d$, that is, the set of elements that can be written as $u_1\wedge\cdots\wedge u_k$ with $u_i \in {\mathbb{R}}^{d}$.

The manifold $\mathbb{G}_k(d)$, $k\in\{1,\ldots,d-1\}$, can be seen as a compact submanifold of the projective space $\mathbb{P}\left(\bigwedge^k\mathbb{R}^d\right)$ of $\bigwedge^k\mathbb{R}^d$ via  Pl\"ucker embedding $\varphi:\mathbb{G}_k(d)\rightarrow\mathbb{P}\left(\bigwedge^k\mathbb{R}^d\right)$, $\varphi([p])=[u_1\wedge\cdots\wedge u_k]$, where $p=[u_1 \ \ldots \ u_k]$ is a $d\times k$ matrix and $[u_1\wedge\cdots\wedge u_k]\in\mathbb{P}\left(\bigwedge^k\mathbb{R}^d\right)$ denotes the class of all non-zero multiples of $u_1\wedge\cdots\wedge u_k\in\bigwedge^k\mathbb{R}^d$.

Identifying the Grassmaniann $\mathbb{G}_k(d)$ as a subset of $\mathbb{P}\left(\bigwedge^k\mathbb{R}^d\right)$, we can write the action $\rho_k$ of ${\rm Sl}(d,\mathbb{R})$ on $\mathbb{G}_k(d)$ as 
$$\rho_k(g,[u_1\wedge\cdots\wedge u_k])=[gu_1\wedge\cdots\wedge gu_k]$$
and  denote $\rho_k(g,[p])$ simply by $g[p]$ .

In the next sections $\pi:\left(\bigwedge^k\mathbb{R}^d\right)\backslash\{0\}\rightarrow\mathbb{P}\left(\bigwedge^k\mathbb{R}^d\right)$ represents the canonical projection.

\section{Cones in $k$-fold exterior product}
From now on we consider a  connected semigroup $S\subset\text{Sl}(d,\mathbb{R})$  with nonempty interior. In this work a {\bf cone} means a closed convex cone in a finite dimensional vector space $V$ and if not otherwise specified the cones are proper and non-trivial. Remember that a cone $W$ is {\bf pointed} if $W\cap-W=\{0\}$ and {\bf generating} if $\text{int}W\neq\emptyset$. Our main interest is to study the $S$-invariance of this kind of cones in  $\bigwedge^k\mathbb{R}^d$, with $1\leq k\leq d-1$.

In this section we present some technical and useful results about cones. In particular, we show that  any $S$-invariant   cone $W\subset\bigwedge^k\mathbb{R}^d$ contains a decomposable element and it is  pointed and generating (see the following Propositions \ref{lemdec} and \ref{lem7}). To obtain these results we need some lemmas.
\begin{lemma}
	\label{um}
	Let $F:V_1\rightarrow V_2$ be an analytic map where  $V_1$ and $V_2$ are  finite dimensional vector spaces. Assume that for a nonempty open set $U\subset V_1$ there is a subspace $V\subset V_2$ such that $F(U)\subset V$. Then $F(V_1)\subset V$.
\end{lemma}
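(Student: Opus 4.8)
The plan is to exploit the fact that an analytic function which vanishes on a nonempty open set vanishes identically, applied coordinate-wise after composing with a linear functional that kills $V$.

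First I would fix a complement: write $V_2 = V \oplus V'$ and let $q\colon V_2 \to V'$ be the associated linear projection. Since $q$ is linear, the composition $q\circ F\colon V_1 \to V'$ is again analytic. By hypothesis $F(U)\subset V$, so $q\circ F$ vanishes on the nonempty open set $U$. Now choose a basis of $V'$ and write $q\circ F = (g_1,\dots,g_m)$ in coordinates; each $g_j\colon V_1\to\mathbb{R}$ is a real-analytic function vanishing on $U$.

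Next I would invoke the identity theorem for real-analytic functions: a real-analytic function on a connected open set (here all of $V_1$, which is connected) that vanishes on a nonempty open subset vanishes everywhere. Strictly, one picks a point $x_0\in U$, notes all partial derivatives of $g_j$ vanish at $x_0$ because $g_j\equiv 0$ near $x_0$, hence the Taylor series of $g_j$ at $x_0$ is zero, hence $g_j\equiv 0$ on the connected domain $V_1$ by the standard propagation-of-vanishing argument. Therefore $q\circ F \equiv 0$ on $V_1$, which is exactly the statement that $F(V_1)\subset \ker q = V$.

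The only mild subtlety — not really an obstacle — is making sure "analytic" is being used in the real-analytic sense on a finite-dimensional real vector space, so that the identity theorem applies; connectedness of $V_1$ is automatic since it is a vector space. If one instead wanted to avoid choosing a complement, one could equivalently take any linear functional $\ell\in V_2^*$ with $\ell|_V = 0$, observe $\ell\circ F$ is analytic and vanishes on $U$ hence on $V_1$, and conclude $F(V_1)\subset \bigcap\{\ker\ell : \ell|_V=0\} = V$. Either route is routine once the identity theorem is in hand.
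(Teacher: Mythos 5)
Your proof is correct and follows essentially the same route as the paper's: compose $F$ with a linear map annihilating $V$ (you use a projection onto a complement, the paper uses the quotient map $V_2\to V_2/V$, which amounts to the same thing) and then apply the identity theorem for real-analytic functions on the connected domain $V_1$. Your write-up simply spells out the identity-theorem step in more detail than the paper does.
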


\begin{proof}
	The canonical projection $p:V_2\rightarrow V_2/V$ is linear and then analytic. Therefore, $p\circ F$ is an analytic function, and $p\circ F(U)=0+V$, since $p(x)\in V$ for all $x\in U$. Therefore, $p\circ F(x)=0+V$, for every $x\in V_1$, because the unique analytic map between finite dimensional vector spaces which vanishes on an open subset of the domain is the null map. Hence, $F(x)\in V$, for all $x\in V_1$.
\end{proof}

\begin{lemma}
	\label{dois}
	Let $V$ be a $d$-dimensional vector space  and take the cone  $W\subset V$. Then $W$ is generating if, and only if, $W$ is not contained in any proper subspace of $V$.
\end{lemma}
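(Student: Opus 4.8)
The plan is to prove the two implications separately; both are elementary facts about convex cones in finite dimensions, so I do not expect a serious obstacle, only a couple of points to be careful about.

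For the forward direction I would argue by contraposition. Suppose $W$ is contained in some proper subspace $U\subsetneq V$. A proper subspace of a finite-dimensional space has empty interior (it is nowhere dense in $V$), so $W\subset U$ forces $\mathrm{int}\,W\subset\mathrm{int}\,U=\emptyset$, i.e. $W$ is not generating. This handles the ``only if'' part.

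For the converse, assume $W$ is not contained in any proper subspace. Then $\mathrm{span}(W)=V$, so I can choose vectors $w_1,\dots,w_d\in W$ that form a basis of $V$. The key point is that a convex cone is closed under both nonnegative scaling and addition: indeed $w_i+w_j=2\left(\tfrac12 w_i+\tfrac12 w_j\right)\in W$ by convexity and the cone property, and iterating shows $\sum_{i=1}^d\lambda_i w_i\in W$ for all $\lambda_i\ge 0$. Now let $T:\mathbb{R}^d\to V$ be the linear isomorphism with $T(e_i)=w_i$. Then $T$ maps the open positive orthant $\{\lambda\in\mathbb{R}^d:\lambda_i>0\ \text{for all }i\}$ onto an open subset of $V$ which is contained in $W$ by the previous observation. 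Hence $\mathrm{int}\,W\neq\emptyset$, so $W$ is generating.

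The only step needing care is the claim that a convex cone containing a basis $w_1,\dots,w_d$ contains the full open cone $\{\sum\lambda_i w_i:\lambda_i>0\}$; this is exactly the positive-orthant image argument above, and it uses only convexity plus the cone property (closedness of $W$, part of the paper's standing convention, is not needed here). Putting the two implications together gives the equivalence.
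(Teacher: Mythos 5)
Your proof is correct and follows essentially the same route as the paper: the forward direction is the observation that a proper subspace has empty interior, and the converse extracts a basis from $W$ and notes that the convex cone it spans (which lies in $W$ by convexity and positive homogeneity) has nonempty interior. You merely spell out two details the paper leaves implicit, namely why a convex cone is closed under sums and why the positive span of a basis is open.
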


\begin{proof}
	If $W$ has nonempty interior, then $W$ is not  contained in a proper subspace of $V$.
	For the converse, observe that  convex cones spanned by any basis of $V$ have nonempty interior. In fact, let $\{e_1,\ldots,e_d\}$ be a basis of $V$. The convex cone spanned by this basis is the set
	$$\left\{\sum_{i=1}^d\alpha_ie_i;\ \alpha_1,\ldots,\alpha_d\ge0\right\} . $$
	 Then the interior of this set is nonempty. Now, assuming that $W$ is not contained in a proper subspace of $V$, we have that $W$ contains a basis  $\mathcal{B}$. Since $W$ is a convex cone it follows that $W$ also contains the convex cone spanned by $\mathcal{B}$. Therefore, $\text{int}W\neq\emptyset$.
\end{proof}

\begin{lemma}\label{theo4}
	If $U$ is open in the set of decomposable elements $\mathcal{D}$ (in the relative topology), then $U$ contains a basis of $\bigwedge^k\mathbb{R}^d$.
\end{lemma}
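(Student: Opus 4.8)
The plan is to reduce the statement to a spanning condition and then invoke Lemma \ref{um} applied to the wedge-product map.

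First I would record the elementary observation that a subset $T$ of a finite-dimensional vector space contains a basis if and only if $T$ spans the whole space, since from any spanning set one can extract a basis. Hence it suffices to prove that $\mathrm{span}(U)=\bigwedge^k\mathbb{R}^d$ for every nonempty set $U$ that is open in $\mathcal{D}$ in the relative topology. I would also note that each standard basis element $e_I=e_{i_1}\wedge\cdots\wedge e_{i_k}$, $I\in\mathcal{F}_k(d)$, is decomposable, so the basis $\{e_I: I\in\mathcal{F}_k(d)\}$ of $\bigwedge^k\mathbb{R}^d$ is entirely contained in $\mathcal{D}$.

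Next, argue by contradiction: suppose $V':=\mathrm{span}(U)$ is a proper subspace of $\bigwedge^k\mathbb{R}^d$, so that $U\subseteq V'$. Consider the wedge-product map $\Phi\colon (\mathbb{R}^d)^k\to\bigwedge^k\mathbb{R}^d$ given by $\Phi(u_1,\dots,u_k)=u_1\wedge\cdots\wedge u_k$. This is polynomial, hence analytic, between finite-dimensional vector spaces, and its image is exactly $\mathcal{D}$. Since $\Phi$ is continuous as a map into $\mathcal{D}$ with the relative topology, the set $U'':=\Phi^{-1}(U)$ is open in $(\mathbb{R}^d)^k$; it is nonempty because any $w\in U\subseteq\mathcal{D}$ is of the form $u_1\wedge\cdots\wedge u_k$ for suitable $u_i\in\mathbb{R}^d$. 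From $\Phi(U'')\subseteq U\subseteq V'$ and Lemma \ref{um} we conclude $\Phi\big((\mathbb{R}^d)^k\big)\subseteq V'$, i.e. $\mathcal{D}\subseteq V'$. This contradicts the previous paragraph, since $\mathcal{D}$ then contains the basis $\{e_I\}$ and so $V'$ cannot be proper. Therefore $\mathrm{span}(U)=\bigwedge^k\mathbb{R}^d$, and $U$ contains a basis.

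The only step demanding care is checking that $\Phi^{-1}(U)$ is a genuine nonempty open subset of $(\mathbb{R}^d)^k$: openness because $U$ is open in the relative topology of $\mathcal{D}$ and $\Phi$ takes values in $\mathcal{D}$, and nonemptiness because every element of $\mathcal{D}$ — in particular every element of $U$ — is by definition a wedge of $k$ vectors. After that Lemma \ref{um} does the work and the contradiction is immediate. I do not anticipate a serious obstacle; the main conceptual point is simply recognizing that ``contains a basis'' is equivalent to ``spans''.
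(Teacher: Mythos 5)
Your argument is correct and follows essentially the same route as the paper: both reduce ``contains a basis'' to ``is not contained in a proper subspace,'' pull $U$ back through the polynomial wedge map $F(u_1,\dots,u_k)=u_1\wedge\cdots\wedge u_k$ to get a nonempty open subset of $(\mathbb{R}^d)^k$, and apply Lemma \ref{um} to force $\mathcal{D}$ into a proper subspace, contradicting the fact that $\mathcal{D}$ spans $\bigwedge^k\mathbb{R}^d$. Your explicit checks of the spanning equivalence and of the nonemptiness of $F^{-1}(U)$ are minor clarifications of steps the paper leaves implicit.
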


\begin{proof}
	First note that we can write the set of decomposable elements as the image of a polynomial function. In fact, let $F:(\mathbb{R}^d)^k\rightarrow\bigwedge^k\mathbb{R}^d$ be the map given by $F(u_1,u_2,\cdots,u_k)=u_1\wedge u_2\wedge\cdots\wedge u_k$. Clearly, $F((\mathbb{R}^d)^k)=\mathcal{D}$. On the other hand, $F$ is polynomial due to the multi-linearity of the wedges, hence it is analytic.
	
	Since $F$ is continuous, the set $F^{-1}(U)$ is open in $(\mathbb{R}^d)^k$. In fact, since $U$ is open in $\mathcal{D}$, there is an open $U^\prime\subset\bigwedge^k\mathbb{R}^d$ with $U=U^\prime\cap\mathcal{D}$. So	$F^{-1}(U)=$ $F^{-1}(U^\prime)\cap(\mathbb{R}^d)^k=$ $F^{-1}(U^\prime)$.
	
	Now, suppose that $U$ does not contain a basis of $\bigwedge^k\mathbb{R}^d$, then $S$ is contained in a proper subspace $Z$, so $F(F^{-1}(U))\subset U\subset Z$, and by Lemma \ref{um}, $F((\mathbb{R}^d)^k)\subset Z$. Hence $\mathcal{D}$ is contained in a proper subspace of $\bigwedge^k\mathbb{R}^d$. This is a contradiction, because $\mathcal{D}$ spans $\bigwedge^k\mathbb{R}^d$. Therefore $U$ contains a basis of $\bigwedge^k\mathbb{R}^d$.
\end{proof}

In the next two propositions we consider the representation $\delta:{\rm Sl}(d,\mathbb{R})\rightarrow {\rm Gl}(\bigwedge^k\mathbb{R}^d)$ where
$$\delta(g)(u_1\wedge\cdots\wedge u_k):=gu_1\wedge\cdots\wedge gu_k.$$
To abbreviate, we  denote $\delta(g)$ simply by $g$.

Now we can prove that an invariant  cone  contains a decomposable element.

\begin{proposition}\label{lemdec}
	Take $S\subset {\rm Sl}(d,\mathbb{R})$ a semigroup with nonempty interior. Let $\{0\}\neq W\subset\bigwedge^k\mathbb{R}^d$ be an $S$-invariant cone. Then $W$ intercepts a non-null decomposable element of $\bigwedge^k\mathbb{R}^d$.
\end{proposition}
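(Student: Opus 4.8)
The plan is: given a nonzero $w\in W$, let a well-chosen regular element $h\in\text{reg}(S)$ act repeatedly on $w$ and show that the normalized iterates converge, inside the closed cone $W$, to the highest-weight line of $\delta(h)$ on $\bigwedge^k\mathbb{R}^d$ --- which, under the Pl\"ucker picture, is spanned by a nonzero decomposable element.

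First I would fix $h\in\text{reg}(S)$ (these exist because $\text{int}\,S\neq\emptyset$ and $\text{reg}(S)$ is dense in it) and pick a basis $\mathcal{B}(h)=\{e_1,\dots,e_d\}$ with $h=\text{diag}\{\lambda_1,\dots,\lambda_d\}$, $\lambda_1>\cdots>\lambda_d>0$. Then $\delta(h)$ is diagonal in the basis $\{e_I:I\in\mathcal{F}_k(d)\}$ of $\bigwedge^k\mathbb{R}^d$, acting on $e_I$ by $\lambda_I:=\prod_{i\in I}\lambda_i$, and since the $\lambda_i$ are positive and strictly decreasing, $\lambda_{I_0}$ with $I_0=\{1,\dots,k\}$ is strictly the largest eigenvalue. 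Writing $w=\sum_I c_I e_I$, if $c_{I_0}\neq 0$ then $\lambda_{I_0}^{-m}\,\delta(h^m)w=\sum_I c_I(\lambda_I/\lambda_{I_0})^m e_I\to c_{I_0}\,e_1\wedge\cdots\wedge e_k$ as $m\to\infty$; each term lies in $W$ (because $h^m\in S$, $W$ is $S$-invariant, and $W$ is a cone), $W$ is closed, so the limit $c_{I_0}\,e_1\wedge\cdots\wedge e_k\in W$ is the desired nonzero decomposable element.

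Hence everything reduces to producing an $h\in\text{reg}(S)$ with $c_{I_0}\neq 0$, i.e. with $w\notin H(h):=\bigoplus_{I\neq I_0}\mathbb{R}e_I$. I would argue by contradiction, assuming $w\in H(h)$ for every $h\in\text{reg}(S)$. Letting $\{e_i^{*}\}$ be the dual basis, a direct check gives $H(h)=\ker\ell_h$ where $\ell_h:=e_1^{*}\wedge\cdots\wedge e_k^{*}\in\bigwedge^k(\mathbb{R}^d)^{*}\cong\big(\bigwedge^k\mathbb{R}^d\big)^{*}$, and $\ell_h$ is the decomposable covector attached to the $k$-plane $\langle e_1^{*},\dots,e_k^{*}\rangle=\text{ann}\langle e_{k+1},\dots,e_d\rangle$. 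Now $\langle e_{k+1},\dots,e_d\rangle$ is the repeller of $h$ on $\mathbb{G}_{d-k}(d)$, equivalently the attractor of $h^{-1}$ there; as $h$ runs over $\text{reg}(S)$ the element $h^{-1}$ runs over $\text{reg}(S^{-1})$ (note $S^{-1}$ also has nonempty interior), so these $k$-planes describe $C_{d-k}^{0}(S^{-1})$, which is dense in the $S^{-1}$-invariant control set $C_{d-k}(S^{-1})\subset\mathbb{G}_{d-k}(d)$, a set with nonempty interior. Transporting through the annihilator diffeomorphism $\mathbb{G}_{d-k}(d)\to\mathbb{G}_k\big((\mathbb{R}^d)^{*}\big)$ and the Pl\"ucker embedding, the family $\{\ell_h:h\in\text{reg}(S)\}$ together with all its scalar multiples is dense in the cone over a subset of $\mathbb{G}_k((\mathbb{R}^d)^{*})$ with nonempty interior; that cone is open in the decomposable elements of $\bigwedge^k(\mathbb{R}^d)^{*}$, so by Lemma \ref{theo4} (applied to $(\mathbb{R}^d)^{*}$) it contains a basis, hence spans $\bigwedge^k(\mathbb{R}^d)^{*}$; and since subspaces are closed, a dense subset of a spanning set still spans. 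Thus $\{\ell_h:h\in\text{reg}(S)\}$ spans $\big(\bigwedge^k\mathbb{R}^d\big)^{*}$, forcing $w\in\bigcap_h\ker\ell_h=\{0\}$, a contradiction. The main obstacle is precisely this last step --- showing that the highest-weight covectors $\ell_h$, $h\in\text{reg}(S)$, cannot all annihilate a fixed $w\neq0$ --- and the route above handles it by dualizing, so that $\{\ell_h\}$ becomes the attractor set of $S^{-1}$ on a Grassmannian, where density of transitivity sets in invariant control sets combines with Lemma \ref{theo4}.
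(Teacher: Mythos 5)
Your proposal is correct, but it takes a substantially longer route than the paper for the one step that actually requires care. The shared part is the dynamics: act by $\delta(h^m)$ for a regular $h$, normalize by the dominant eigenvalue, and use closedness of $W$ to capture the limit. The difference is which decomposable element you aim for. You insist on landing on the highest-weight vector $e_{I_0}$, $I_0=\{1,\dots,k\}$, which forces you to prove that some $h\in\mathrm{reg}(S)$ has $c_{I_0}(h)\neq 0$; you then handle this by dualizing, identifying the covectors $\ell_h$ with the attractor set of $S^{-1}$ on $\mathbb{G}_{d-k}(d)$, and invoking the density of the transitivity set in the invariant control set together with Lemma \ref{theo4} to show the $\ell_h$ span the dual space. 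That argument is valid (and as a by-product shows $W$ meets the cone over the set of transitivity $C_k^0$, a slightly stronger fact in the spirit of what is used later in Theorem \ref{teo10}), but the paper sidesteps the entire issue: since \emph{every} basis vector $e_J$ is decomposable, one does not need the coefficient of $e_{I_0}$ to be nonzero. The paper simply takes $J_0$ to be the $\prec$-maximal index among those $I$ with $\alpha_I\neq 0$ (after perturbing $h$ inside $\mathrm{int}\,S$ so that the eigenvalue products $\lambda_{i_1}\cdots\lambda_{i_k}$ are pairwise distinct and $\prec$ is a total order), and the normalized iterates converge to $\alpha_{J_0}e_{J_0}\in W$, which is already a non-null decomposable element. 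So the paper's proof is self-contained and elementary, while yours trades that simplicity for machinery (invariant control sets of $S^{-1}$, the annihilator duality, Lemma \ref{theo4}) that the proposition does not require; note also that your shortcut of not perturbing $h$ works only because $\lambda_{I_0}$ is automatically the strict maximum, whereas the paper's choice of a general $J_0$ is exactly why it needs the perturbation.
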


\begin{proof}
	Take $h\in \text{reg}(S)$ and consider as before the basis $\mathcal{B}=\{e_1,\cdots,e_d\}$   of $\mathbb{R}^d$ such that   $h=\text{diag}(\lambda_1,\cdots,\lambda_d)$  with $\lambda_1>\cdots>\lambda_d>0$. Note that for $I=\{i_1,\ldots,i_k\}\in\mathcal{F}_k(d)$, the vectors $e_I=e_{i_1}\wedge\cdots\wedge e_{i_k}\in\bigwedge^k\mathbb{R}^d$ are eigenvectors of $h$, with eigenvalues $\lambda_{i_1}\cdots\lambda_{i_k}$. Moreover, they form a basis of $\bigwedge^k\mathbb{R}^d$.
	
	Define the following order relation on $\mathcal{F}_k(d)$: given $I=\{i_1,\ldots,i_k\}$ and $J=\{j_1,\ldots,j_k\}$ in $\mathcal{F}_k(d)$,
	$$I\prec J \mbox{ if } \lambda_{i_1}\cdots\lambda_{i_k}<\lambda_{j_1}\cdots\lambda_{j_k}.$$
	
	If necessary, we take a perturbation of   $h=\text{diag}(\lambda_1,\ldots,\lambda_d)\in\text{int}S$ such that $\prec$ become a total order. Consider $0\neq v\in W$ with $v=\sum_{I\in\mathcal{F}_k(d)}\alpha_{I}e_{I}$ and define 
	$$J_0=\{j_1,\ldots,j_k\}=\max\{I\in\mathcal{F}_k(d);\ \alpha_I\neq0\}.$$
	
	As $W$ is  $S$-invariant,   we have that $W$ is invariant under $h$. Hence, 
	$$\left(\frac{h^m(v)}{(\lambda_{j_1}\cdots\lambda_{j_k})^m}\right)_{m\in\mathbb{N}}$$
	is a sequence in $W$ and
	\[
		\frac{h^m(v)}{(\lambda_{j_1}\cdots\lambda_{j_k})^m}=\sum\limits_{I\in\mathcal{F}_k(d)}\alpha_{I}\frac{h^m(e_{I})}{(\lambda_{j_1}\cdots\lambda_{j_k})^m}
		=\sum\limits_{I\in\mathcal{F}_k(d)}\alpha_{I}\frac{(\lambda_{i_1}\cdots\lambda_{i_k})^m}{(\lambda_{j_1}\cdots\lambda_{j_k})^m}e_I,
	\]
	for all $m\in\mathbb{N}$. Note that if $I=\{i_1,\cdots,i_k\}\notin\{I\in\mathcal{F}_k(d);\ \alpha_I\neq0\}$, then $\alpha_{I}\dfrac{(\lambda_{i_1}\cdots\lambda_{i_k})^m}{(\lambda_{j_1}\cdots\lambda_{j_k})^m}=0$, for all $m\in\mathbb{N}$. Moreover
	$\lambda_{j_1}\cdots\lambda_{j_k}>\lambda_{i_1}\cdots\lambda_{i_k}$ for all $\{i_1,\ldots,i_k\}$ in $\{I\in\mathcal{F}_k(d);\ \alpha_I\neq0\}\backslash\{J_0\}$.
	Hence,
	$$\lim_{m\rightarrow\infty}\dfrac{(\lambda_{i_1}\cdots\lambda_{i_k})^m}{(\lambda_{j_1}\cdots\lambda_{j_k})^m}=0.$$
	
	Therefore
	\begin{eqnarray*}
		\displaystyle\lim_{m\rightarrow\infty}\frac{h^m(v)}{(\lambda_{j_1}\cdots\lambda_{j_k})^m}=\displaystyle\lim_{m\rightarrow\infty}\sum\limits_{I\in\mathcal{F}_k(d)}\alpha_{I}\frac{(\lambda_{i_1}\cdots\lambda_{i_k})^m}{(\lambda_{j_1}\cdots\lambda_{j_k})^m}e_I=\alpha_{J_0}e_{J_0}
	\end{eqnarray*}
	
	The closeness of $W$  implies that the decomposable element $\alpha_{J_0}e_{J_0}$ belongs to $W$, and moreover, this element is non-null.
\end{proof}

Hence we have the main result of this section.

\begin{proposition}\label{lem7}
	Let $S\subset {\rm Sl}(d,\mathbb{R})$ be a semigroup with non empty interior. If $\{0\}\neq W\subset\bigwedge^k\mathbb{R}^d$ is a  $S$-invariant cone, then $W$ is pointed and generating.
\end{proposition}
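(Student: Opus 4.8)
The plan is to derive both properties from one auxiliary claim: \emph{every nonzero $S$-invariant subspace of $\bigwedge^k\mathbb R^d$ is the whole space} (here I call a subspace $U$ $S$-invariant if $gU\subseteq U$ for all $g\in S$, writing $g$ for $\delta(g)$ as elsewhere in this section). Granting this claim, the two conclusions follow quickly. For \textbf{generating} I would invoke Lemma \ref{dois} and show that $W$ lies in no proper subspace: since $W$ is a convex cone, $\mathrm{span}(W)$ is a subspace, it is $S$-invariant because $g\,\mathrm{span}(W)=\mathrm{span}(gW)\subseteq\mathrm{span}(W)$, and it is nonzero since $W\neq\{0\}$; so the claim forces $\mathrm{span}(W)=\bigwedge^k\mathbb R^d$. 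For \textbf{pointed} I would look at the lineality space $L:=W\cap(-W)$: it is a subspace (closedness and convexity of $W$ give closure under sums and all scalar multiples) and it is $S$-invariant, because $x\in L$ and $g\in S$ imply $gx\in gW\subseteq W$ and $gx=-g(-x)\in-W$; if $L\neq\{0\}$, the claim would give $L=\bigwedge^k\mathbb R^d$, hence $W=\bigwedge^k\mathbb R^d$, contradicting that $W$ is a proper subset, so $L=\{0\}$.

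The real content is the auxiliary claim. Let $U\neq\{0\}$ be an $S$-invariant subspace; I may assume $U\neq\bigwedge^k\mathbb R^d$, otherwise there is nothing to prove. The key step is to upgrade $S$-invariance to $\mathrm{Sl}(d,\mathbb R)$-invariance. Consider $H:=\{g\in\mathrm{Sl}(d,\mathbb R):gU\subseteq U\}$. It is closed under products since $\delta$ is a homomorphism, and it is in fact a subgroup: for $g\in H$ the operator $g|_U$ is an injective endomorphism of the finite-dimensional space $U$, hence bijective, so $gU=U$ and therefore $g^{-1}U=U$. Now $S\subseteq H$ and $S$ has nonempty interior, so $H$ has nonempty interior; a subgroup of a topological group with an interior point is open, and an open subgroup of the connected group $\mathrm{Sl}(d,\mathbb R)$ must be everything. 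Hence $U$ is $\mathrm{Sl}(d,\mathbb R)$-invariant. To conclude, note that $U$ is also a nonzero $S$-invariant cone, so Proposition \ref{lemdec} provides a nonzero decomposable element $v=u_1\wedge\cdots\wedge u_k\in U$; by $\mathrm{Sl}(d,\mathbb R)$-invariance, $gu_1\wedge\cdots\wedge gu_k\in U$ for every $g$, and since $\mathrm{Sl}(d,\mathbb R)$ acts transitively on $\mathbb G_k(d)$, for each multi-index $I=\{i_1,\ldots,i_k\}$ I can choose $g$ sending $\langle u_1,\ldots,u_k\rangle$ to $\langle e_{i_1},\ldots,e_{i_k}\rangle$, which places a nonzero multiple of the basis vector $e_I$ in $U$. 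Thus $U$ contains a basis of $\bigwedge^k\mathbb R^d$ and equals it. (Alternatively, one may simply invoke the classical irreducibility of the $\mathrm{Sl}(d,\mathbb R)$-representation $\bigwedge^k\mathbb R^d$.)

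I expect the only genuinely delicate point to be this passage from $S$-invariance to full $\mathrm{Sl}(d,\mathbb R)$-invariance of the subspace — the two ingredients being that invertibility together with finite-dimensionality turns the semigroup condition $gU\subseteq U$ into the group condition $gU=U$, and that a subgroup with nonempty interior is open, so that connectedness of $\mathrm{Sl}(d,\mathbb R)$ applies. Everything else is routine bookkeeping once Lemma \ref{dois}, Proposition \ref{lemdec} and transitivity of the action on $\mathbb G_k(d)$ are available.
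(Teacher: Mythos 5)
Your proof is correct, but the \textbf{generating} half takes a genuinely different and shorter route than the paper's. For pointedness you and the paper do essentially the same thing: identify the lineality space $W\cap(-W)$ as a nonzero $S$-invariant subspace, upgrade semigroup invariance to ${\rm Sl}(d,\mathbb{R})$-invariance, and invoke irreducibility of $\bigwedge^k\mathbb{R}^d$; the paper phrases the upgrade as ``${\rm Sl}(d,\mathbb{R})$ is generated by $S\cup S^{-1}$'' while you phrase it via the stabilizer $\{g: gU\subseteq U\}$ being an open subgroup of a connected group, which is the same mechanism (both hinge on $gU\subseteq U$ forcing $gU=U$ in finite dimension). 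For the generating property, however, the paper does \emph{not} reuse irreducibility: it assumes ${\rm int}\,W=\emptyset$, takes a decomposable $x\in W$ (Proposition \ref{lemdec}), uses openness of the orbit map onto $\mathbb{G}_k(d)$ together with the Pl\"ucker embedding to see that $\pi^{-1}\bigl(\pi(({\rm int}\,S)x)\bigr)$ is open in the decomposable cone $\mathcal{D}$, applies Lemma \ref{theo4} to extract a basis of $\bigwedge^k\mathbb{R}^d$ lying in $W\cup(-W)$, and contradicts Lemma \ref{dois}. You instead observe that ${\rm span}(W)$ is itself a nonzero $S$-invariant subspace, hence the whole space by the same irreducibility claim, and then Lemma \ref{dois} finishes. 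Your version unifies both conclusions under one auxiliary fact and avoids Lemma \ref{theo4} and the orbit-openness argument entirely; the paper's version is longer but yields the extra (unused here) information that $W\cup(-W)$ already contains a basis of decomposable elements coming from a single ${\rm int}\,S$-orbit. One small point worth making explicit if you write this up: when you apply Proposition \ref{lemdec} to the subspace $U$ in your direct proof of irreducibility, you are using that the proof of that proposition only needs $U$ to be a closed, positively homogeneous, $S$-invariant set (the paper's standing convention that ``cones'' are proper is not used there), which is fine but deserves a sentence.
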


\begin{proof}
	First recall that the representation of ${\rm Sl}(d,\mathbb{R})$ on $\bigwedge^k\mathbb{R}^d$ is irreducible. 

Now, define $H=W\cap-W$. Then $H$ is an $S$-invariant vector subspace.
We have also that $H$ is $S^{-1}$-invariant, because if $g\in S$, then $gH\subset H$. Since $g$ is invertible, $gH$ is a subspace of $H$ with $\dim gH=\dim H$,
i.e., $gH=H$. Consequently, $H=g^{-1}H$. The fact that $\text{int}S\neq\emptyset$ implies that ${\rm Sl}(d,\mathbb{R})$ is generated by $S\cup S^{-1}$. Hence $H$ is ${\rm Sl}(d,\mathbb{R})$-invariant, now knowing that $W$ is proper and  ${\rm Sl}(d,\mathbb{R})$ is irreducible we have that  $H=\{0\}$. Hence $W$ is pointed.

Finally, assume that $\text{int}W=\emptyset$. By Lemma \ref{dois}, $W\cup-W$ is contained in a proper subspace $V$ of $\bigwedge^k\mathbb{R}^d$. Consider a decomposable element $x\in W$ and take $\rho_{k}^{q}:{\rm Sl}(d,\mathbb{R})\rightarrow\mathbb{G}_k(d)$ the open map $\rho_{k}^{q}(g)=[gq]$  where $[q]:=\varphi^{-1}(\pi(x))$ and $\varphi$ is the Pl\"ucker embedding defined in the second section. Then $\varphi(\rho_{k}^{q}(\text{int}S))$ is open in $\varphi(\mathbb{G}_k(d))$, that is, there exists an open set $B\subset\mathbb{P}\left(\bigwedge^k\mathbb{R}^d\right)$ such that
$$\varphi(\rho_{k}^{q}(\text{int}S))=B\cap\varphi(\mathbb{G}_k(d)).$$

Knowing that
\[
\pi^{-1}(\varphi(\phi(\text{int}S)))=\pi^{-1}(B\cap\varphi(\mathbb{G}_k(d)))=\pi^{-1}(B)\cap\mathcal{D} ,
\]
we have that $\pi^{-1}(\varphi(\phi(\text{int}S)))$ is open in $\mathcal{D}$. By Lemma \ref{theo4}, $\pi^{-1}(\varphi(\phi(\text{int}S)))$ contains a basis of $\bigwedge^k\mathbb{R}^d$. Note also that 
$$\pi^{-1}(\varphi(\phi(\text{int}S)))=\pi^{-1}(\varphi((\text{int}S)[q]))=\pi^{-1}(\pi((\text{int}S)x))=\pi^{-1}((\text{int}S)\pi(x)).$$

So, if $y\in \pi^{-1}(\varphi(\phi(\text{int}S)))$, then $\pi(y)\in(\text{int}S)\pi(x)$, hence there is $g\in\text{int}S$ with $\pi(y)=g\pi(x)=\pi(gx)$, that is, $y=\alpha gx$ for some $\alpha\neq0$. If $\alpha>0$, then $y\in\alpha Sx\subset\alpha W=W$ and if $\alpha<0$, then $y\in\alpha Sx\subset\alpha W=-W$. Anyway $y\in W\cup-W$ and we conclude that $\pi^{-1}(\varphi(\phi(\text{int}S)))\subset W\cup-W$. But it is a contradiction, because $\pi^{-1}(\varphi(\phi(\text{int}S)))$ is contained in the proper subspace $V$ of $\bigwedge^k\mathbb{R}^d$ and contains a basis of $\bigwedge^k\mathbb{R}^d$. Therefore, $\text{int}W\neq\emptyset$.
\end{proof}

\section{Cones, flag type and controllability}

In this section we prove that there exists an $S$-invariant cone in $\bigwedge^k\mathbb{R}^d$ if and only if  the flag type of $S$ contains $k$.  Consequently we have the main result of this section,  Theorem \ref{controllab}, that gives a necessary and sufficient condition for the equality   $S=\text{Sl}(d,\mathbb{R})$ in terms of the existence of $S$-invariant cones in the spaces $\bigwedge^k\mathbb{R}^d$, $k\in\{1,\ldots,d-1\}$. As application, we determine necessary and sufficient conditions for the controllability of a bilinear control system.  

\begin{theorem}\label{theo5}
	Let $S\subset {\rm Sl}(d,\mathbb{R})$ be a connected semigroup with flag type given by $\Theta(S)$. If $k\in\Theta(S)$, then there exists an $S$-invariant cone $\{0\}\neq W\subset\bigwedge^k\mathbb{R}^d$.
\end{theorem}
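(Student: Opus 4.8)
The plan is to realize the $S$-invariant control set $C_k\subset\mathbb{G}_k(d)$ inside $\bigwedge^k\mathbb{R}^d$ as a consistently normalized family of decomposable vectors and to take $W$ to be the closed convex cone they generate; the hypothesis $k\in\Theta(S)$ is exactly what makes the normalization available. First I would fix a regular element $h\in\mathrm{reg}(S)$ (possible since $\mathrm{int}\,S\neq\emptyset$) with its basis $\mathcal{B}(h)=\{e_1,\ldots,e_d\}$, so that $h=\mathrm{diag}(\lambda_1,\ldots,\lambda_d)$ with $\lambda_1>\cdots>\lambda_d>0$, and abbreviate $I=\{1,\ldots,k\}$, $e_I=e_1\wedge\cdots\wedge e_k$, writing $\langle\,\cdot\,,e_I^*\rangle$ for the $e_I$-coefficient in the basis $\{e_J:J\in\mathcal{F}_k(d)\}$. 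Because $k\in\Theta(S)$, the facts recalled in Section~2 give $C_k\subset N_{\mathcal{B}(h)}[p_0]$, the open dense Bruhat cell, with $[p_0]=\langle e_1,\ldots,e_k\rangle\in C_k$ (the attractor of $h$). A point of this cell represented in the standard form $\left[\begin{smallmatrix}I_k\\ X\end{smallmatrix}\right]$ has the property that the decomposable vector $\mathrm{col}_1\wedge\cdots\wedge\mathrm{col}_k$ formed from its columns has $e_I$-coefficient equal to the top $k\times k$ minor $\det I_k=1$; hence every $[p]\in C_k$ has a \emph{unique} decomposable representative $v_{[p]}\in\bigwedge^k\mathbb{R}^d$ with $\langle v_{[p]},e_I^*\rangle=1$, the assignment $[p]\mapsto v_{[p]}$ is continuous (polynomial in the Bruhat coordinates), and $v_{[p_0]}=e_I$.

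Then I would set $K=\{v_{[p]}:[p]\in C_k\}$ and let $W$ be the closed convex cone generated by $K$. Since $C_k$ is compact (an invariant control set is closed, and $\mathbb{G}_k(d)$ is compact) and $[p]\mapsto v_{[p]}$ is continuous, $K$ is compact, and it lies in the affine hyperplane $\{v:\langle v,e_I^*\rangle=1\}$, so $0\notin\mathrm{conv}(K)$. Consequently $W\neq\{0\}$, and $W$ is proper because it is contained in the closed half-space $\{v:\langle v,e_I^*\rangle\ge 0\}\subsetneq\bigwedge^k\mathbb{R}^d$ (one may also note that the convex cone generated by a compact set whose convex hull misses the origin is already closed and proper).

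Finally I would verify $S$-invariance, which is the one delicate step, since each $v_{[p]}$ is pinned down only up to a nonzero scalar whose sign must be controlled. For $g\in S$ and $[p]\in C_k$ we have $g[p]\in S[p]\subset\mathrm{cl}(S[p])=C_k$, and $gv_{[p]}=gu_1\wedge\cdots\wedge gu_k$ is a nonzero decomposable representative of $g[p]$, which lies again in the open Bruhat cell; hence $gv_{[p]}=c(g,[p])\,v_{g[p]}$ with $c(g,[p])=\langle gv_{[p]},e_I^*\rangle\neq 0$. Fixing $[p]$, the function $g\mapsto c(g,[p])$ is continuous and nowhere vanishing on the connected set $S$, so it has constant sign; since $c(h,[p])=\langle hv_{[p]},e_I^*\rangle=\lambda_1\cdots\lambda_k\,\langle v_{[p]},e_I^*\rangle=\lambda_1\cdots\lambda_k>0$, we conclude $c(g,[p])>0$ for every $g\in S$. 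Thus $gv_{[p]}$ is a positive multiple of an element of $K$, so $g$ carries every generator of $W$ into $W$; by linearity and continuity $gW\subset W$, and $W$ is $S$-invariant (and then pointed and generating, by Proposition~\ref{lem7}). The main obstacle is precisely this sign bookkeeping, and the point is that $k\in\Theta(S)$ confines $C_k$ to a single open Bruhat cell, where the normalization $\langle v_{[p]},e_I^*\rangle=1$ is available globally along $C_k$.
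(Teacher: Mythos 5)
Your construction is essentially the paper's: your normalized family $K=\{v_{[p]}:[p]\in C_k\}$ is exactly the set $M_1=\pi^{-1}(\varphi(C_k))\cap M$ used in the paper's proof (the affine slice of the cone over $\varphi(C_k)$ on which the $e_I$-coordinate equals $1$), and $W$ is the same cone. The only difference is the bookkeeping of the sign, where both arguments use connectedness of $S$ and both are correct: the paper observes that a connected $S$ cannot contain an element swapping the two components of $(\mathbb{R}\setminus\{0\})M_1$ (since $g^2$ would then preserve them), whereas you anchor the continuous, nowhere-vanishing function $g\mapsto c(g,[p])$ at the value $c(h,[p])=\lambda_1\cdots\lambda_k>0$.
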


\begin{proof}
	Take $h\in \text{reg}(S)$   and consider $\mathcal{B}(h)=\{e_1,\ldots,e_d\}$ the special basis of $\mathbb{R}^d$. We saw in the second section that  $b_{\{k\}}(h)=(\mbox{span}\{e_1,\ldots,e_k\})$ and the orbit 
	$$N_{\mathcal{B}(h)}b_{\{k\}}(h)=\left\{\left[
	\begin{array}{c}
	I_k\\
	X\\
	\end{array}
	\right];\ X\in\mathbb{R}^{(d-k)\times k}\right\}$$
	contains $C_{k}$.
	Note that $\varphi(N_{\mathcal{B}(h)}b_{\{k\}}(h))\subset\pi(M)$, where $M$ is the affine subspace
	$$M=\left\{(1,x_2,\cdots,x_{{d}\choose{k}});\ x_2,\cdots,x_{{d}\choose{k}}\in\mathbb{R}\right\}\subset\bigwedge^k\mathbb{R}^d$$
	in the basis $\{e_I;\ I\in\mathcal{F}_k(d)\}$.
	Since the invariant control set $C_k\subset \mathbb{G}_k(d)$ is contained in $N_{\mathcal{B}(h)}b_{\{k\}}(h)$, we have 
	$$\varphi(C_k)\subset\varphi(N_{\mathcal{B}(h)}b_{\{k\}}(h))\subset\pi(M).$$

	Define $M_1:=\pi^{-1}(\varphi(C_k))\cap M$. Let $W$ be the cone generated by $M_1$,  $W$ is clearly non-null.
	
	Now, we  show that $W$ is $S$-invariant. Since $C_k$ is $S$-invariant, it follows that $\varphi(C_k)$ is $S$-invariant. We claim that $(\mathbb{R}\backslash\{0\})M_1$ is $S$-invariant. In fact, given $\alpha\in\mathbb{R}\backslash\{0\}$, $u_1\wedge\cdots\wedge u_k\in M_1$ and $g\in S$, we have that
	$\pi(g(\alpha\ u_1\wedge\cdots\wedge u_k))$ $=\pi(g(u_1\wedge\cdots\wedge u_k))$ is contained  in $\pi(gM_1)=g\pi(M_1)=g\varphi(C_k)\subset\varphi(C_k)$,
due to the equality   $\pi(M_1)=\varphi(C_k)$ and the $S$-invariance of $\varphi(C_k)$. Hence knowing  that $\pi|_M$ is injective, we conclude the claim. As $S$ is  connected this implies that $C_k$, $\varphi(C_k)$ and $M_1$ are connected.
	
	Furthermore, since for every $x\in\left(\bigwedge^k\mathbb{R}^d\right)\backslash\{0\}$ the mapping	$g\in S\mapsto gx\in\bigwedge^k\mathbb{R}^d$ is continuous, we conclude that $S$ leaves invariant the
	connected components of $(\mathbb{R}\backslash\{0\})M_1$. As $(\mathbb{R}^{+})M_1$ is one of these components,  $(\mathbb{R}^{+})M_1$ is invariant, implying that its convex closure $W$	is $S$-invariant.
\end{proof}  

\begin{remark}
Our result generalizes Theorem 4.2 in \cite{RSS} and also improves its hypotheses in the sense that we do not need to have the identity in ${\rm cl}S$. In \cite{RSS} the authors assume $1 \in S$ to guarantee that $S$ leaves invariant the connected components of $({\Bbb R} \backslash \{0\})M_{1}$, but we can show that this is not necessary. In fact, let $g \in S$, then $g$ leaves  $({\Bbb R} \backslash \{0\})M_{1}$ invariant. So $g$ is a bijection between the connected components of $({\Bbb R} \backslash \{0\})M_{1}$. Denote by $M_{1}^{+}=({\Bbb R}^{+})M_{1}$ and  $M_{1}^{-}=({\Bbb R}^{-})M_{1}$ these connected components. Suppose that there is an element $g \in S$ which does not leave  $M_{1}^{+}$ invariant. Then $g(M_{1}^{+})=M_{1}^{-}$ and  $g(M_{1}^{-})=M_{1}^{+}$. Hence we have another element in $S$,   $g^{2}$, that leaves invariant the components, but  this contradicts the connectedness of $S$.  
\end{remark}

 We also note that by Proposition \ref{lem7}, the cone $W$, in the above theorem  is pointed and generating.

The following results prove that the existence of a pointed invariant cone in $\bigwedge^k\mathbb{R}^d$ implies that the flag type of the semigroup contains $k$.

\begin{lemma}
	\label{lema}
	Assume that $k\notin\Theta(S)$. Let $C_k$ be the invariant control set for the action of $S$ on $\mathbb{G}_k(d)$. Then there is a two-dimensional subspace $V\subset\bigwedge^k\mathbb{R}^d$ such that $\pi(V)\subset\varphi(C_k)$.
\end{lemma}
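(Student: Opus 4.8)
The plan is to use the hypothesis $k\notin\Theta(S)$ to produce, inside the invariant control set $C_k\subset\mathbb{G}_k(d)$, a whole \emph{pencil} of $k$-dimensional subspaces; under the Pl\"ucker embedding such a pencil is exactly a projective line $\pi(V)$ with $\dim V=2$. The two structural facts I would lean on are the ones recalled in the Introduction: if $C$ denotes the $S$-invariant control set in the full flag manifold $\mathbb{F}$, then $C=\pi_{\Theta(S)}^{-1}(C_{\Theta(S)})$, and for every $\Theta$ one has $C_\Theta=\pi_\Theta(C)$; in particular $C_k=\pi_{\{k\}}(C)$.

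Write $\Theta(S)=\{r_1<\cdots<r_n\}$ and set $r_0=0$, $r_{n+1}=d$. Since $k\in\{1,\ldots,d-1\}\setminus\Theta(S)$, there is an index $j\in\{0,\ldots,n\}$ with $r_j<k<r_{j+1}$, so $r_j\le k-1$, $r_{j+1}\ge k+1$, and in particular $r_{j+1}-r_j\ge2$. I would then fix any flag $\xi=(W_{r_1}\subset\cdots\subset W_{r_n})\in C_{\Theta(S)}$ (with the conventions $W_{r_0}=\{0\}$, $W_{r_{n+1}}=\mathbb{R}^d$) and choose subspaces $A\subset B$ with $W_{r_j}\subseteq A\subset B\subseteq W_{r_{j+1}}$, $\dim A=k-1$, $\dim B=k+1$; this is possible precisely because $r_j\le k-1<k+1\le r_{j+1}$. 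Let $\mathcal{P}=\{W:\ A\subset W\subset B,\ \dim W=k\}$ be the associated pencil.

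Next I would check that $\mathcal{P}\subset C_k$. Given $W\in\mathcal{P}$, since $W_{r_j}\subset W\subset W_{r_{j+1}}$ the nested chain $\{0\}\subset\cdots\subset W_{r_j}\subset W\subset W_{r_{j+1}}\subset\cdots\subset\mathbb{R}^d$ can be refined to a full flag $\hat\xi\in\mathbb{F}$ whose $k$-th subspace is $W$; then $\pi_{\Theta(S)}(\hat\xi)=\xi\in C_{\Theta(S)}$, hence $\hat\xi\in\pi_{\Theta(S)}^{-1}(C_{\Theta(S)})=C$, and therefore $W=\pi_{\{k\}}(\hat\xi)\in\pi_{\{k\}}(C)=C_k$. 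To finish, I would pick a basis $a_1,\ldots,a_{k-1}$ of $A$, complete it to a basis $a_1,\ldots,a_{k-1},b_1,b_2$ of $B$, and set
\[
V=\big\langle\, a_1\wedge\cdots\wedge a_{k-1}\wedge b_1,\ \ a_1\wedge\cdots\wedge a_{k-1}\wedge b_2\,\big\rangle .
\]
These two decomposable vectors are two distinct elements of the basis of $\bigwedge^k\mathbb{R}^d$ associated to any completion of $a_1,\ldots,a_{k-1},b_1,b_2$ to a basis of $\mathbb{R}^d$, so they are linearly independent and $\dim V=2$; and every nonzero $v\in V$ has the form $a_1\wedge\cdots\wedge a_{k-1}\wedge(sb_1+tb_2)$ with $(s,t)\neq(0,0)$, which is decomposable and represents the subspace $\langle A,\, sb_1+tb_2\rangle\in\mathcal{P}\subset C_k$. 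Hence $\pi(v)=\varphi(\langle A,sb_1+tb_2\rangle)\in\varphi(C_k)$ for every such $v$, i.e.\ $\pi(V)\subset\varphi(C_k)$.

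The only genuinely nontrivial ingredient is the appeal to $C=\pi_{\Theta(S)}^{-1}(C_{\Theta(S)})$ together with $C_\Theta=\pi_\Theta(C)$ (both cited from San Martin's work in the Introduction); granting these, the remainder is elementary linear algebra about pencils of subspaces and the behaviour of the Pl\"ucker map on them. The only point needing care is the dimension bookkeeping that lets one insert $A$ and $B$ strictly between $W_{r_j}$ and $W_{r_{j+1}}$, which is exactly the content of $r_{j+1}-r_j\ge2$; the degenerate case $n=0$ (so $\Theta(S)=\emptyset$) is covered by the same argument with $r_0=0$ and $r_1=d$.
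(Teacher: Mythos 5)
Your proof is correct and follows essentially the same route as the paper's: both exploit the fact that the fiber $\pi_{\Theta(S)}^{-1}(\pi_{\Theta(S)}(f))$ lies in the full-flag invariant control set and projects into $C_k$, then realize the resulting pencil of $k$-planes between two consecutive flag subspaces as a projective line $\pi(V)$ with $V$ spanned by two wedges sharing the first $k-1$ factors. The only difference is cosmetic: the conventions $r_0=0$, $r_{n+1}=d$ let you treat in one stroke the three cases ($r_1<k<r_n$, $k<r_1$, $k>r_n$) that the paper handles separately.
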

\begin{proof}
	Denote by $\pi_k:\mathbb{F}\rightarrow\mathbb{G}_k(d)$ the natural projection and consider $[p]\in C_k$. Let $f$ be an element of the invariant control set $C$ of the full flag $\mathbb{F}$ with $\pi_k(f)=[p]$. Such element exists because $C_k= {\pi}_{k}(C)$. Let $\Theta(S)=\{r_1,\ldots,r_n\}$ be the flag type of $S$ and observe that $\pi_{\Theta(S)}^{-1}(\pi_{\Theta(S)}(f))$ is a subset of $C$, where  $\pi_{\Theta (S)}:\mathbb{F}\rightarrow\mathbb{F}_{\Theta (S)}$ . Therefore, $\pi_k(\pi_{\Theta(S)}^{-1}(\pi_{\Theta(S)}(f)))\subset C_k$. Since $k\notin\Theta(S)$ then $\pi_{\Theta(S)}(f)=\left(V_1\subset\cdots\subset V_n\right)$
	with $\text{dim}V_i=r_i$, $1\leq i\leq n$. We have the following cases:\newline
	\textbf{Case 1:} Assume that $r_1<k<r_n$. In this case, there exists $l\in\{1,\ldots,n-1\}$ such that the elements of $\pi_k(\pi_{\Theta(S)}^{-1}(\pi_{\Theta(S)}(f)))$ are the $k$-subspaces that contain $V_l$ and are contained in $V_{l+1}$. Let $\{v_1,\cdots,v_{r_l}\}$ be a basis of $V_l$, and complete it to an ordered basis $\{v_1,\cdots,v_{r_l},v_{r_l+1},\cdots,v_{r_{l+1}}\}$ of $V_{l+1}$. Since $r_l<k$ and $r_{l+1}>k$, consider the element $v_k$ in this basis of $V_{l+1}$ and, moreover, there is a basic element $v_{j}$ with $k<j\leq r_{l+1}$. In this way, define the subspace
	$$V=\{v_1\wedge\cdots\wedge v_{r_l}\wedge\cdots\wedge v_{k-1}\wedge(\alpha v_k+\beta v_j);\ \alpha,\beta\in\mathbb{R}\}.$$
	\newline
	\textbf{Case 2:} Now, suppose that $k<r_1$. Here, the elements of $\pi_k(\pi_{\Theta(S)}^{-1}(\pi_{\Theta(S)}(f)))$ are the $k$-subspaces contained in $V_{1}$. Since $k\geq1$, then $r_1\geq2$. Hence, given an ordered basis $\{v_1,\cdots,v_{r_1}\}$ of $V_1$, we can find $v_{k},v_{j}\in \{v_1,\cdots,v_{r_1}\}$ where $j$ satisfies $k<j\leq r_1$. Consider the subspace
	\begin{eqnarray}\label{setV}
V=\{v_1\wedge\cdots\wedge v_{k-1}\wedge(\alpha v_k+\beta v_j);\ \alpha,\beta\in\mathbb{R}\}.
	\end{eqnarray}
	\newline
	\textbf{Case 3:} Finally, assume $k>r_n$. Hence, $\pi_k(\pi_{\Theta(S)}^{-1}(\pi_{\Theta(S)}(f)))$ is the set formed by the $k$-subspaces which contains $V_{n}$. Since $k\leq d-1$, we can consider a basis $\{v_1,\ldots,v_{r_n}\}$ of $V_{r_n}$ and complete it to obtain the  ordered basis $\{v_1,\ldots,v_{r_n},v_{r_n+1},\ldots,v_d\}$ of $\mathbb{R}^d$. In this case, we can also take $v_k$ and $v_j$ in this basis, with $k<j\leq d$ and consider the subspace defined as in (\ref{setV}).
	
	In the three cases, the subspace $V\subset\bigwedge^k\mathbb{R}^d$ is two-dimensional and satisfies 
	$$\pi(V)\subset\varphi(\pi_k( \pi^{-1}_{\Theta(S)}(\pi_{\Theta(S)}(f))))\subset\varphi(C_k).$$
\end{proof}

The following theorem is a reciprocal of Theorem \ref{theo5}.

\begin{theorem}\label{teo10}
	If $\{0\}\neq W\subset\bigwedge^k\mathbb{R}^d$ is an  $S$-invariant cone, then $k\in\Theta(S)$.
\end{theorem}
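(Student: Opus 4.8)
The plan is to argue by contradiction: assume $k\notin\Theta(S)$ and contradict the fact, established in Proposition \ref{lem7}, that $W$ is pointed. The engine of the argument is the chain of inclusions $\varphi(C_k)\subseteq\pi(W\setminus\{0\})$ together with Lemma \ref{lema}, which under the hypothesis $k\notin\Theta(S)$ produces a genuine $2$-dimensional subspace inside the cone $W\cup(-W)$.

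First I would bring in the decomposable element. By Proposition \ref{lemdec}, $W$ contains a non-null decomposable element $x_0=u_1\wedge\cdots\wedge u_k$; set $[q]:=\varphi^{-1}(\pi(x_0))\in\mathbb{G}_k(d)$. Recall from Section 2 that the invariant control set satisfies $C_k=\bigcap_{[p]\in\mathbb{G}_k(d)}\mathrm{cl}(S[p])$, so in particular $C_k\subseteq\mathrm{cl}(S[q])$. The Pl\"ucker embedding $\varphi$ is a homeomorphism onto a compact (hence closed) subset of $\mathbb{P}\left(\bigwedge^k\mathbb{R}^d\right)$ and is ${\rm Sl}(d,\mathbb{R})$-equivariant, so applying $\varphi$ gives $\varphi(C_k)\subseteq\varphi(\mathrm{cl}(S[q]))=\mathrm{cl}(\varphi(S[q]))=\mathrm{cl}(S\varphi([q]))=\mathrm{cl}(\pi(Sx_0))$. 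Since $W$ is $S$-invariant and each $gx_0$ with $g\in S$ is non-null (as $g$ is invertible), we have $Sx_0\subseteq W\setminus\{0\}$, hence $\pi(Sx_0)\subseteq\pi(W\setminus\{0\})$. Moreover $\pi(W\setminus\{0\})=\pi(W\cap\Sigma)$, where $\Sigma$ is the unit sphere of $\bigwedge^k\mathbb{R}^d$, so it is compact and therefore closed in $\mathbb{P}\left(\bigwedge^k\mathbb{R}^d\right)$. Consequently $\varphi(C_k)\subseteq\mathrm{cl}(\pi(Sx_0))\subseteq\pi(W\setminus\{0\})$.

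Next I apply Lemma \ref{lema}: since $k\notin\Theta(S)$, there is a two-dimensional subspace $V\subseteq\bigwedge^k\mathbb{R}^d$ with $\pi(V)\subseteq\varphi(C_k)\subseteq\pi(W\setminus\{0\})$. Taking preimages under $\pi$, and using $\pi^{-1}(\pi(W\setminus\{0\}))=(W\cup(-W))\setminus\{0\}$, we obtain $V\subseteq W\cup(-W)$. Now I finish by a connectedness argument: the punctured plane $V\setminus\{0\}$ is connected and is covered by the two relatively closed subsets $W\cap(V\setminus\{0\})$ and $(-W)\cap(V\setminus\{0\})$. If these two sets meet, there is a non-null vector in $W\cap(-W)$, contradicting that $W$ is pointed. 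If they are disjoint, connectedness forces one of them to be empty; say $(-W)\cap(V\setminus\{0\})=\emptyset$, so $V\setminus\{0\}\subseteq W$, hence $V\subseteq W$ by closedness, and then $V=-V\subseteq -W$, giving $V\subseteq W\cap(-W)=\{0\}$, contradicting $\dim V=2$. Either way we reach a contradiction, so $k\in\Theta(S)$.

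The step that requires the most care — the main obstacle — is the inclusion $\varphi(C_k)\subseteq\pi(W\setminus\{0\})$: it silently combines the orbit-closure characterization of the invariant control set, the equivariance and closed-embedding properties of $\varphi$, the $S$-invariance of $W$, and the (easy but essential) fact that $\pi(W\setminus\{0\})$ is closed in projective space because $W$ is a closed cone. Once this is in place, the remainder is purely topological (connectedness of a punctured $2$-plane) and uses only the already-proved pointedness of $W$ from Proposition \ref{lem7}.
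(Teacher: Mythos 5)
Your proof is correct and follows essentially the same route as the paper's: both use Proposition \ref{lemdec} to locate a decomposable element, force the invariant control set $C_k$ into the projectivization of the cone, invoke Lemma \ref{lema} to produce a two-dimensional subspace inside $W\cup(-W)$, and contradict the pointedness guaranteed by Proposition \ref{lem7}. The only differences are minor: you derive $\varphi(C_k)\subseteq\pi(W\setminus\{0\})$ from the intersection formula $C_k=\bigcap_{[p]}\mathrm{cl}(S[p])$ rather than from the paper's observation that the compact $S$-invariant set $\varphi^{-1}(\pi(W\cap\mathcal{D}))$ must contain the unique invariant control set, and your connectedness argument on the punctured plane $V\setminus\{0\}$ handles the sign ambiguity in $\pi^{-1}$ somewhat more carefully than the paper's direct assertion that $\pi^{-1}(\varphi(C_k))\subseteq W$.
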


\begin{proof}
	Assume that $k\notin\Theta(S)$ and denote by $L$ the intersection of $W$ with the set $\mathcal{D}$ of the decomposable elements of $\bigwedge^k\mathbb{R}^d$. By Proposition \ref{lemdec} we have that $L$ is nonempty. Moreover, $L$ is $S$-invariant, since the set of decomposable elements is also $S$-invariant. Therefore, $\varphi^{-1}(\pi(L))$ is also invariant. As $W$ is a closed set then $L$ is closed in $\mathcal{D}$ and hence $\varphi^{-1}(\pi(L))$ is a closed set in $\mathbb{G}_k(d)$. Since $\mathbb{G}_k(d)$ is compact, $\varphi^{-1}(\pi(L))$ is also compact, then there is an invariant control set contained in $\varphi^{-1}(\pi(L))$. But there is only one invariant control set  $C_{k} \subset \mathbb{G}_k(d)$ implying that $C_k\subset\varphi^{-1}(\pi(L))$ and hence $\pi^{-1}(\varphi(C_k))\subset L\subset W$. As proved in Lemma \ref{lema}, there is a two-dimensional subspace $V$ such that $\pi(V)\subset\varphi(C_k)$. But this means that $V\subset\pi^{-1}(\varphi(C_k))\subset W$, which is a contradiction because $W$ is pointed (see Proposition \ref{lem7}).
\end{proof}

 Recall that  if $S\subset \mathrm{Sl}(d,\mathbb{R})$ is a nonempty semigroup, then $S$ is transitive on 
	$\mathbb{R}^{d}\backslash \{0\}$ if and only if $S=\mathrm{Sl}(d,\mathbb{R})$ (see \cite{RSS}). In this context, the next theorem  gives a necessary and sufficient condition in terms of the existence of invariant cones.

\begin{theorem}\label{controllab}
	Let $S\subset {\rm Sl}(d,\mathbb{R})$ be a semigroup with nonempty interior. Then $S={\rm Sl}(d,\mathbb{R})$ if and only if there are no  $S$-invariant  cones in $\bigwedge^k\mathbb{R}^d$, for all $k\in\{1,\ldots,d-1\}$.
\end{theorem}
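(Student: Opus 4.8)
The plan is to read off this statement from Theorems \ref{theo5} and \ref{teo10} together with the structural facts about the flag type recalled in Section 2. Those two theorems combine to a clean dichotomy: for a fixed $k\in\{1,\ldots,d-1\}$ there is a nontrivial $S$-invariant cone in $\bigwedge^k\mathbb{R}^d$ if and only if $k\in\Theta(S)$ (here $S$ is connected, as assumed throughout this section, which is what Theorem \ref{theo5} requires). Since by definition $\Theta(S)\subseteq\Theta_M=\{1,\ldots,d-1\}$, the hypothesis ``there are no $S$-invariant cones in $\bigwedge^k\mathbb{R}^d$ for every $k\in\{1,\ldots,d-1\}$'' is exactly ``$\Theta(S)\cap\{1,\ldots,d-1\}=\emptyset$'', i.e. $\Theta(S)=\emptyset$. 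So the whole theorem reduces to the equivalence $\Theta(S)=\emptyset\iff S=\mathrm{Sl}(d,\mathbb{R})$.

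To settle that equivalence I would argue both directions from the facts on invariant control sets in Section 2. If $S=\mathrm{Sl}(d,\mathbb{R})$, then $S$ acts transitively on the full flag manifold $\mathbb{F}$, so the invariant control set is $C=\mathbb{F}$; being the minimal flag manifold for which $C=\pi_{\Theta(S)}^{-1}(C_{\Theta(S)})$, the flag type $\mathbb{F}_{\Theta(S)}$ must then be the one-point flag manifold, that is, $\Theta(S)=\emptyset$. Conversely, if $\Theta(S)=\emptyset$ then $\mathbb{F}_{\Theta(S)}$ is a point, hence $C=\pi_{\Theta(S)}^{-1}(C_{\Theta(S)})=\mathbb{F}$, and projecting gives $C_1=\pi_1(C)=\mathbb{P}^{d-1}=\mathbb{G}_1(d)$; since a proper subsemigroup of $\mathrm{Sl}(d,\mathbb{R})$ satisfies $C_k\neq\mathbb{G}_k(d)$ for every $k\in\{1,\ldots,d-1\}$ (Section 2), $S$ cannot be proper and so $S=\mathrm{Sl}(d,\mathbb{R})$.

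I would also record that the implication ``$S=\mathrm{Sl}(d,\mathbb{R})\Rightarrow$ no invariant cones'' has a direct proof not passing through the flag type: if $S=\mathrm{Sl}(d,\mathbb{R})$ then $C_k=\mathbb{G}_k(d)$, so the argument in the proof of Theorem \ref{teo10} yields $\mathcal{D}\setminus\{0\}=\pi^{-1}(\varphi(C_k))\subset W$ for any invariant cone $W$, and since $\mathcal{D}=-\mathcal{D}$ this contradicts the pointedness of $W$ established in Proposition \ref{lem7}. Finally, combining this theorem with the fact recalled just before its statement --- $S$ is transitive on $\mathbb{R}^d\setminus\{0\}$ iff $S=\mathrm{Sl}(d,\mathbb{R})$ --- immediately gives the controllability criterion: the bilinear system \eqref{forbilinear} with $A,B\in\mathfrak{sl}(d,\mathbb{R})$ generating $\mathfrak{sl}(d,\mathbb{R})$ is controllable on $\mathbb{R}^d\setminus\{0\}$ precisely when its system semigroup admits no proper invariant cone in any $\bigwedge^k\mathbb{R}^d$, $k\in\{1,\ldots,d-1\}$.

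Essentially all of the work is the already-proved Theorems \ref{theo5} and \ref{teo10}; the only point needing genuine care is the equivalence $\Theta(S)=\emptyset\iff S=\mathrm{Sl}(d,\mathbb{R})$. This is a purely structural statement --- the flag type of the full group is the one-point flag manifold, and every proper subsemigroup has proper invariant control sets in all Grassmannians, hence a nonempty flag type --- so no new analytic or geometric difficulty appears. One must only keep track of the fact that $\Theta(S)\subseteq\{1,\ldots,d-1\}$, so that ``no invariant cones for all such $k$'' really does force $\Theta(S)$ to be empty rather than merely disjoint from the admissible range.
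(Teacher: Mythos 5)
Your proof is correct, but for one of the two implications it takes a genuinely different route from the paper. The direction ``no invariant cones $\Rightarrow S=\mathrm{Sl}(d,\mathbb{R})$'' is handled exactly as in the paper: if $S$ is proper then $\Theta(S)\neq\emptyset$ and Theorem \ref{theo5} produces a cone (the paper simply quotes ``$S$ proper $\Rightarrow\Theta(S)\neq\emptyset$''; your derivation of this from $C=\pi_{\Theta(S)}^{-1}(C_{\Theta(S)})$ and the fact that $C_k\neq\mathbb{G}_k(d)$ for proper $S$ is a welcome extra). For the converse, however, the paper does \emph{not} pass through Theorem \ref{teo10} or the flag type at all: it takes $v_1\in W\cap\mathcal{D}$ from Proposition \ref{lemdec}, a decomposable $v_2\notin W$ (which exists because a proper convex $W$ cannot contain all of $\mathcal{D}$, whose convex hull is the whole space), and uses the transitivity of $\mathrm{Sl}(d,\mathbb{R})$ on nonzero decomposables to contradict invariance. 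You instead deduce this direction from Theorem \ref{teo10} together with the structural fact $\Theta(\mathrm{Sl}(d,\mathbb{R}))=\emptyset$. Both routes work; the paper's is more elementary and self-contained, and avoids invoking Lemma \ref{lema} in the degenerate case $\Theta(S)=\emptyset$, where its case analysis (which presupposes $\Theta(S)=\{r_1,\ldots,r_n\}$ nonempty) needs a small re-reading, whereas yours makes the clean logical skeleton ``cone for $k$ $\iff k\in\Theta(S)$, and $\Theta(S)=\emptyset\iff S=\mathrm{Sl}(d,\mathbb{R})$'' explicit. Your parenthetical ``direct proof'' via $C_k=\mathbb{G}_k(d)$ is looser than the paper's argument (the step $\pi^{-1}(\varphi(C_k))\subset W$ inherits the same scalar ambiguity already present in the proof of Theorem \ref{teo10}), but it is inessential to your main line. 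Finally, like the paper's own example after the theorem shows, connectedness of $S$ (the standing hypothesis of the section, omitted from the theorem's statement) is genuinely needed, and you correctly flag where it enters (Theorem \ref{theo5}).
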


\begin{proof}
Let $W\subset\bigwedge^k\mathbb{R}^d$ be a proper $S$-invariant cone, for some $k\in\{1,\ldots,d-1\}$. Note that $W$ does not contain $\mathcal{D}$,  otherwise the convexity of $W$ would  imply that the convex closure of $\mathcal{D}$, $\bigwedge^k\mathbb{R}^d$, would be contained in $W$, which would contradicts the fact that $W$ is proper.

By Proposition \ref{lemdec} we can consider an element $v_1\in W\cap\mathcal{D}$. Take $v_2\in\mathcal{D}\backslash W$. If $S=\text{Sl}(d,\mathbb{R})$ and knowing that $\mathcal{D}$ is $S$-invariant then there exists $g\in S$ such that $gv_1=v_2\notin W$, but this contradicts the $S$-invariance of $W$. Hence $S\neq\text{Sl}(d,\mathbb{R})$.

On the other hand, assume that $S\subset \text{Sl}(d,\mathbb{R})$ is proper. Then $\Theta(S)\neq\emptyset$, hence there exists $k\in\Theta(S)$, for some $k\in\{1,\ldots,d-1\}$. Therefore, Theorem \ref{theo5} implies the existence of a such cone.
\end{proof}

\begin{remark}
	This theorem complement and improve Section 7 of \cite{RSS}.
\end{remark}

The next example shows that, as we commented before, the connectedness of $S$ is fundamental in the previous results.

\begin{example}
	
	Let $S^+\subset {\rm Sl}(2, \mathbb{R})$ be the set of matrices with positive entries. 
	It is not difficult to show that that  $S^+$ is a proper semigroup with nonempty interior in ${\rm Sl}(2, \mathbb{R})$,   the positive orthant 
	$Q^+=\{(a,b)\in\mathbb{R}^;a,b\ge 0\}$ 
	is  $S^+$-invariant and  $S^+$ is a  open set. Now take the following  proper semigroup
	$$S=S^+\cup(-S^+)=(-1)^{\mathbb{Z}}S^+=\{(-1)^kA;k\in\mathbb{Z},A\in S^+\}.$$
 Note that $S$  has nonempty interior. Moreover, $S$ is  not transitive on $\mathbb{R}^2$ because it leaves invariant the double cone $Q^+\cup -Q^+=(-1)^\mathbb{N}Q^+$:
	$$S((-1)^\mathbb{N}Q^+)=(-1)^\mathbb{N}S^+(-1)^\mathbb{N}Q^+=(-1)^{\mathbb{N}+\mathbb{N}}S^+Q^+=(-1)^{\mathbb{N}}Q^+.$$
	However, $S$ does not leave invariant proper cones in $\mathbb{R}^2=\bigwedge^1\mathbb{R}^2$. In fact, we have that $-I\in S$, therefore, if $C$ is a proper $S$ invariant cone then $-I(C)=-C\subset C$. This implies that $C$ is a subspace, which is a contradiction.
\end{example}

As a consequence of the above results, we get a necessary and sufficient condition for controllability of 
$$\dot{x}=Ax+uBx,x\in \mathbb{R}^{d}\setminus \{0\},u\in \mathbb{R}  ,$$
with $A,B\in\mathfrak{sl}(d,\mathbb{R})$.

Recall that the system semigroup  
$$S=\{e^{t_1(A+u_1B)}\cdots e^{t_n(A+u_nB)};\ t_1,\ldots,t_n\geq0,\ u_1,\ldots,u_n\in\mathbb{R},\ n\in\mathbb{N}\}$$
is a semigroup of ${\rm Sl}(d,\mathbb{R})$. Moreover, if the Lie algebra, generated by $A$ and $B$, coincides with $\mathfrak{sl}(d,\mathbb{R})$, then $\text{int}S\neq\emptyset$. Furthermore, $S$ is path connected. It is well know that this system is controllable if, and only if, $S={\rm Sl}(d,\mathbb{R})$ (see e.g. \cite{RSS}). Hence,  as a result of Theorem \ref{controllab} we have the necessary and sufficient condition for controllability of this bilinear system.

\begin{theorem}
The above system is controllable if and only if  it does not leave  invariant a cone in  $\bigwedge ^{k}{\Bbb R}^{d}$, for all $k \in \{1, \ldots , d-1 \}$.
\end{theorem}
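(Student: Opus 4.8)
The plan is to chain together the two facts recalled in the paragraph preceding the statement with Theorem~\ref{controllab}. Under the standing hypothesis that $A$ and $B$ generate $\mathfrak{sl}(d,\mathbb{R})$ as a Lie algebra, the system semigroup $S$ defined above is a path-connected subsemigroup of $\mathrm{Sl}(d,\mathbb{R})$ with $\mathrm{int}\,S\neq\emptyset$, and the classical result (see \cite{RSS}) gives that the system is controllable on $\mathbb{R}^d\setminus\{0\}$ if and only if $S=\mathrm{Sl}(d,\mathbb{R})$. Since $S$ is then a connected semigroup with nonempty interior, Theorem~\ref{controllab} applies and says that $S=\mathrm{Sl}(d,\mathbb{R})$ if and only if there is no proper $S$-invariant cone in $\bigwedge^k\mathbb{R}^d$ for any $k\in\{1,\dots,d-1\}$. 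Concatenating the two equivalences yields the theorem.

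So the proof I would write is essentially two lines, and the only things to verify are the hypotheses of the two black boxes: that $S$ is a semigroup inside $\mathrm{Sl}(d,\mathbb{R})$ (immediate, since each $e^{t(A+uB)}$ has determinant $1$ because $A+uB\in\mathfrak{sl}(d,\mathbb{R})$, and $S$ is closed under concatenation); that $S$ is path connected (each generator $e^{t(A+uB)}$ is joined to the identity by $s\mapsto e^{st(A+uB)}$, $s\in[0,1]$, and a product of such paths to $I$ is again a path to $I$); and that $\mathrm{int}\,S\neq\emptyset$ under the bracket-generating assumption, which is the one substantive input and is exactly the point recalled from the literature just above. Given these, neither direction of the equivalence requires a new argument.

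If one wished to avoid citing Theorem~\ref{controllab} as a black box, I would argue directly: for the forward direction, if the system is controllable then $S=\mathrm{Sl}(d,\mathbb{R})$ acts transitively on the nonzero decomposable elements of each $\bigwedge^k\mathbb{R}^d$, so a proper $S$-invariant cone cannot exist (by Proposition~\ref{lemdec} it would meet $\mathcal D$, hence by transitivity contain all of $\mathcal D$, hence contain its convex hull $\bigwedge^k\mathbb{R}^d$, contradicting properness); for the converse, if $S\neq\mathrm{Sl}(d,\mathbb{R})$ then $\Theta(S)\neq\emptyset$ and Theorem~\ref{theo5} produces an invariant cone in $\bigwedge^k\mathbb{R}^d$ for each $k\in\Theta(S)$. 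Either way there is no real obstacle here; the content of the result lies entirely in Theorems~\ref{theo5}, \ref{teo10} and \ref{controllab}, and the present statement is just their translation into the language of control systems.
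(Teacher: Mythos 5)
Your proposal is correct and follows exactly the paper's own argument: the paper likewise obtains this theorem by combining the classical fact that controllability is equivalent to $S=\mathrm{Sl}(d,\mathbb{R})$ (citing \cite{RSS}) with Theorem~\ref{controllab}, after noting that $S$ is a path-connected semigroup with nonempty interior under the Lie algebra rank condition. Your verification of the hypotheses is a welcome bit of extra care, but the route is the same.
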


\section{Flag type and invariance of convex sets}

In this section, we generalize the previous one. Or rather, instead of proper cones, we study the existence of proper convex sets in $\bigwedge^k\mathbb{R}$ which are invariant by the action of a semigroup $S \subset \text{Sl}(d,\mathbb{R})$. We also  relate the existence of this convex sets  with the flag type $\Theta(S)$ of $S$. 

Initially,  given $h\in \text{reg}(S)$, take as before the  basis $\mathcal{B} (h)=\{e_1,\ldots,e_d\}$ of $\mathbb{R}^d$.  Since $1=\det(h)=\lambda_1\cdots\lambda_d$, then for all $k\in\{1,\ldots,d-1\}$, we can prove that $\lambda_1\cdots\lambda_k>1$. 

The following lemma gives  an expression for the closed convex cone generated by a convex set in $\bigwedge^k\mathbb{R}^d$.

\begin{lemma}
	If the set $K\subset\bigwedge^k\mathbb{R}^d$ is convex, then the closed convex cone $W$ generated by $K$ is
	$$W:={\rm cl}(\displaystyle\bigcup_{\alpha>0}\alpha K).$$
\end{lemma}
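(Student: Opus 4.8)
The plan is to show the two inclusions $W \supset {\rm cl}\left(\bigcup_{\alpha>0}\alpha K\right)$ and $W \subset {\rm cl}\left(\bigcup_{\alpha>0}\alpha K\right)$, where $W$ denotes the smallest closed convex cone containing $K$. The first inclusion is immediate: any closed convex cone containing $K$ must contain $\alpha K$ for every $\alpha > 0$ (cone property), hence contain $\bigcup_{\alpha>0}\alpha K$, and being closed it contains the closure of this union; applying this to $W$ itself gives the inclusion.

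For the reverse inclusion the main point is to check that the set $W_0 := \bigcup_{\alpha>0}\alpha K$ is already convex, so that its closure is a closed convex cone; it is evidently a cone (scaling by $\beta>0$ sends $\alpha K$ to $(\beta\alpha)K$), and it clearly contains $K$ (take $\alpha=1$), so once we know ${\rm cl}(W_0)$ is a closed convex cone containing $K$, minimality of $W$ forces $W\subset{\rm cl}(W_0)$. To see $W_0$ is convex, take $x = \alpha_1 u_1$ and $y = \alpha_2 u_2$ with $u_1,u_2\in K$, $\alpha_1,\alpha_2>0$, and $t\in[0,1]$; then $tx+(1-t)y = (t\alpha_1+(1-t)\alpha_2)\left(\frac{t\alpha_1}{t\alpha_1+(1-t)\alpha_2}u_1 + \frac{(1-t)\alpha_2}{t\alpha_1+(1-t)\alpha_2}u_2\right)$, and the bracketed vector is a convex combination of $u_1,u_2$, hence in $K$ by convexity of $K$, while the scalar prefactor is positive. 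So $tx+(1-t)y\in W_0$. Finally one recalls the elementary fact that the closure of a convex set is convex and the closure of a cone is a cone, so ${\rm cl}(W_0)$ is indeed a closed convex cone.

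The only genuinely delicate point — and the step I would flag as the main obstacle — is that ${\rm cl}(W_0)$ might a priori be strictly larger than the smallest \emph{closed} convex cone, or conversely that there could be a subtlety about whether $0$ needs to be adjoined; here $W_0$ as written does not obviously contain $0$, but its closure does (limits of $\alpha u$ as $\alpha\to 0^+$), so ${\rm cl}(W_0)$ is a bona fide cone in the paper's sense. One should double-check there is no issue with unboundedness preventing the closure from being a cone, but scaling commutes with taking closures (multiplication by a fixed positive scalar is a homeomorphism of the finite-dimensional space), so $\beta\,{\rm cl}(W_0) = {\rm cl}(\beta W_0) = {\rm cl}(W_0)$ for all $\beta>0$, confirming ${\rm cl}(W_0)$ is a cone. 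Assembling: ${\rm cl}(W_0)$ is a closed convex cone containing $K$, hence $W\subset{\rm cl}(W_0)$; and ${\rm cl}(W_0)\subset W$ by the first paragraph; therefore $W = {\rm cl}\left(\bigcup_{\alpha>0}\alpha K\right)$.
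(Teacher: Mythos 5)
Your proof is correct and follows essentially the same route as the paper: both arguments rest on the same rescaling identity expressing a convex combination of $\alpha_1u_1$ and $\alpha_2u_2$ as a positive multiple of a convex combination of $u_1,u_2\in K$, and then invoke minimality of the generated cone. The only cosmetic difference is that you establish convexity of $\bigcup_{\alpha>0}\alpha K$ before taking closures, whereas the paper runs the same computation on approximating sequences inside the closure directly.
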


\begin{proof}
	Let $\{W_l\}_{l\in\Lambda}$ be the family of all closed cones that contains $K$ and consider $V:=\displaystyle\bigcap_{l\in\Lambda}W_l$ the closed convex cone generated by $K$.
	
	Note that $W$ is a closed cone which contains $K$. To show that $W$ is convex, take $x,y\in W$. There are sequences $(\gamma_nx_n),(\delta_ny_n)$ in $\displaystyle\bigcup_{\alpha>0}\alpha K$ with $\gamma_n,\delta_n>0$ and $x_n,y_n\in K$ (for all $n\in\mathbb{N}$) converging to $x$ and $y$ respectively. Take $t\in[0,1]$ and define
	$$z_n=\left(\dfrac{(1-t)\gamma_n}{(1-t)\gamma_n+t\delta_n}\right)x_n+\left(\dfrac{t \delta_n}{(1-t)\gamma_n+t\delta_n}\right)y_n, \ n\in\mathbb{N}.$$
	Note that $(z_n)$ is a sequence in $K$, then $\left(\left((1-t)\gamma_n+t\delta_n\right)z_n\right)$ is a sequence in  $\bigcup\limits_{\alpha>0}\alpha K$, since $(1-t)\gamma_n+t\delta_n>0$. But $\left((1-t)\gamma_n+t\delta_n\right)z_n=(1-t)\gamma_n x_n+t\delta_n y_n$ converges to $(1-t)x+ty$,
	hence $(1-t)x+ty\in W$.	Therefore $V\subset W$.
	
	 On the other hand,  for each $\gamma>0$ we have $\gamma K\subset W_l$, for all $l\in\Lambda$, then $\displaystyle\bigcup_{\gamma>0}\gamma K\subset W_l$, for all $l\in\Lambda$. Hence the closeness of each $W_l$ implies that $W\subset W_l$, for all $l\in\Lambda$, so $W\subset V$.
\end{proof}

\begin{proposition}\label{pro13}
	Let $K\subset\bigwedge^k\mathbb{R}^d$ be a proper $S$-invariant convex set. Then the closed cone generated by $K$ is $S$-invariant.
\end{proposition}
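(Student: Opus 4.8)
The plan is to show that the closed convex cone $W = \mathrm{cl}\left(\bigcup_{\alpha>0}\alpha K\right)$ (using the previous lemma's description) is invariant under the action of each $g\in S$ on $\bigwedge^k\mathbb{R}^d$. The key observation is that $\delta(g)$ is linear, hence commutes with scalar multiplication and is continuous, so it essentially transports the construction of $W$ from $K$ to $gK$. First I would fix $g\in S$ and take an arbitrary $w\in W$; by the lemma, there is a sequence $(\alpha_n x_n)$ with $\alpha_n>0$ and $x_n\in K$ converging to $w$. Applying $g$ and using linearity gives $g(\alpha_n x_n)=\alpha_n (g x_n)$, and since $K$ is $S$-invariant we have $g x_n\in K$ for all $n$. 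Thus $\alpha_n(g x_n)\in\bigcup_{\alpha>0}\alpha K$, and by continuity of $\delta(g)$ this sequence converges to $gw$. Since $W$ is closed, $gw\in W$. Hence $gW\subset W$ for every $g\in S$, i.e.\ $W$ is $S$-invariant.

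There are two minor points to address so the statement is fully justified in the intended sense. One is that $W$ should be a genuine cone (closed and convex), which is exactly the content of the preceding lemma, so I would simply invoke it rather than re-prove convexity. The other, slightly more delicate, point is that for the conclusion to be meaningful one wants $W$ to be \emph{proper} and nontrivial: it is nontrivial because $K\neq\emptyset$ (a convex set, and "proper" in the paper's sense presumably excludes the empty set), and one should check $W\neq\bigwedge^k\mathbb{R}^d$. For the latter I would argue that if $W$ were the whole space, then in particular $W$ would contain the decomposable elements, and by the argument already used in the proof of Theorem \ref{controllab} together with Proposition \ref{lemdec} and the closedness/convexity machinery, one can derive that $K$ itself is forced to be too large (or use that $K$ proper and $S$-invariant forces, via the flag-type analysis developed in this section, that $W$ is contained in a half-space determined by a regular $h\in\mathrm{reg}(S)$ — this is where the hypothesis $\lambda_1\cdots\lambda_k>1$ noted just before the lemma enters).

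I expect the main obstacle to be precisely this last issue: verifying that the cone generated by a \emph{proper} invariant convex set is again \emph{proper}, rather than the routine invariance argument. The invariance itself is essentially a two-line consequence of linearity and continuity of $\delta(g)$ combined with the explicit formula $W=\mathrm{cl}\left(\bigcup_{\alpha>0}\alpha K\right)$; the subtlety lies in controlling the "size" of $W$, which is why the paragraph immediately preceding the lemma isolates the inequality $\lambda_1\cdots\lambda_k>1$ for a regular element — I anticipate that fact is used to place $K$ (and hence $\bigcup_{\alpha>0}\alpha K$) strictly on one side of a hyperplane, keeping the generated cone pointed or at least proper. If the paper's convention makes "proper" part of the hypothesis transfer automatically, then the proof reduces to the invariance argument alone and the obstacle evaporates.
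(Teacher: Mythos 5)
Your invariance argument is exactly the paper's proof: the paper computes $gW=g\bigl(\mathrm{cl}(\bigcup_{\alpha>0}\alpha K)\bigr)\subset\mathrm{cl}(\bigcup_{\alpha>0}\alpha\, gK)\subset\mathrm{cl}(\bigcup_{\alpha>0}\alpha K)=W$ using linearity, continuity and $gK\subset K$, which is your element-wise sequence argument written set-theoretically. The properness issue you worry about is not part of this proposition's claim --- the paper handles it separately in Proposition \ref{prop13} and Corollary \ref{corol15} --- so your core proof is complete and matches the paper's approach.
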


\begin{proof}
	Denote by $W$ the closed cone generated by $K$. Since $K$ is $S$-invariant, for each $g\in S$ it holds that $gK\subset K$. Hence
	$$gW=g\left({\rm cl}({\displaystyle\bigcup_{\alpha>0}\alpha K})\right)\subset {\rm cl}({g\left(\displaystyle\bigcup_{\alpha>0}\alpha K\right)})={\rm cl}({\displaystyle\bigcup_{\alpha>0}\alpha gK})\subset {\rm cl}(\displaystyle\bigcup_{\alpha>0}\alpha K)=W,$$
	that is, $W$ is $S$-invariant.
\end{proof}

\begin{proposition}\label{prop13}
	If $K\subset\bigwedge^k\mathbb{R}^d$ is a proper $S$-invariant convex set, then $0\notin{\rm int}K$.
\end{proposition}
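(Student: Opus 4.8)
The plan is to argue by contradiction: suppose $0 \in \mathrm{int}\,K$. First I would use the fact that $K$ is convex and $0$ is an interior point to deduce that $K$ contains a small closed ball $B$ around the origin, in particular $K$ contains a basis of $\bigwedge^k\mathbb{R}^d$ together with its negatives, hence (by Lemma \ref{dois}, applied to the cone generated by $K$, or directly) $K$ is not contained in any proper subspace. The key leverage comes from the regular element: take $h \in \mathrm{reg}(S)$ with basis $\mathcal{B}(h) = \{e_1,\ldots,e_d\}$ and eigenvalues $\lambda_1 > \cdots > \lambda_d > 0$ of $h$ on $\mathbb{R}^d$, so that the $e_I$, $I \in \mathcal{F}_k(d)$, are eigenvectors of $h$ on $\bigwedge^k\mathbb{R}^d$ with eigenvalues $\lambda_{i_1}\cdots\lambda_{i_k}$. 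As recalled at the start of Section 5, $\lambda_1\cdots\lambda_k > 1$, so $e_{I_0}$ with $I_0 = \{1,\ldots,k\}$ is an eigenvector of $h$ with eigenvalue $\mu := \lambda_1\cdots\lambda_k > 1$.

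Next I would exploit $S$-invariance of $K$ under $h$ to drive iterates off to infinity. Since $0 \in \mathrm{int}\,K$, for small $\varepsilon > 0$ the vector $\varepsilon e_{I_0}$ lies in $K$, and by $S$-invariance $h^m(\varepsilon e_{I_0}) = \varepsilon \mu^m e_{I_0} \in K$ for all $m \in \mathbb{N}$; since $\mu > 1$, these points have norm tending to $\infty$ along the ray $\mathbb{R}^+ e_{I_0}$. Likewise, applying $h$ to $-\varepsilon e_{I_0} \in K$ shows the entire ray $\mathbb{R}^+(-e_{I_0})$ meets $K$ at arbitrarily large norm. Combining this with the fact that $K$ is a convex set containing a ball around $0$ (hence containing, for each $I$, a small segment around $0$ in the $e_I$ direction), a convexity argument will show that $K$ actually contains the whole line $\mathbb{R} e_{I_0}$: indeed, for any $t \in \mathbb{R}$, $t e_{I_0}$ lies on the segment between $-\varepsilon\mu^m e_{I_0} \in K$ and $\varepsilon\mu^m e_{I_0} \in K$ once $m$ is large. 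More generally, repeating the argument with perturbations of $h$ (so that other products $\lambda_{i_1}\cdots\lambda_{i_k}$ exceed $1$) and with the regular elements whose attractor is an arbitrary point of $C_k^0$, one can push the ball around $0$ out along many directions; but in fact a cleaner route is available.

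The cleanest route, which I would ultimately adopt, is to pass to the closed cone $W$ generated by $K$: by Proposition \ref{pro13}, $W$ is a closed convex $S$-invariant cone. If $0 \in \mathrm{int}\,K$, then the generated cone $W$ is all of $\bigwedge^k\mathbb{R}^d$ (a neighborhood of $0$ generates the whole space as a cone), so $W$ is not proper. But Theorem \ref{teo10} — equivalently Proposition \ref{lem7} — forces every nontrivial $S$-invariant cone to be pointed and proper; a non-proper $W = \bigwedge^k\mathbb{R}^d$ would in particular fail to be pointed (as $\dim \bigwedge^k\mathbb{R}^d \geq 1$ and $W \cap -W = W \neq \{0\}$), contradicting Proposition \ref{lem7}. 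Hence $0 \notin \mathrm{int}\,K$. The main obstacle — and the only point requiring care — is the very first reduction: verifying that the cone generated by a convex set with $0$ in its interior is indeed the whole space, which follows since such a $K$ contains a basis of $\bigwedge^k\mathbb{R}^d$ and its negatives, so $\bigcup_{\alpha>0}\alpha K$ already covers every point; one then invokes Proposition \ref{pro13} to get $S$-invariance for free, and Proposition \ref{lem7} closes the argument.
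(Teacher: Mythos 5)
There is a genuine gap in the route you ultimately adopt. From $0\in\mathrm{int}\,K$ you correctly conclude that the closed cone $W$ generated by $K$ is all of $\bigwedge^k\mathbb{R}^d$, and you then claim this contradicts Proposition \ref{lem7} (equivalently Theorem \ref{teo10}). It does not: under the paper's standing convention those results concern \emph{proper} non-trivial invariant cones, and the full space $\bigwedge^k\mathbb{R}^d$ is always a (non-proper, non-pointed) $S$-invariant closed convex cone, for every $S$ and every $k$, so no contradiction arises from $W$ failing to be proper or pointed. The hypothesis you must contradict is the properness of $K$ itself; that is, you must show that $0\in\mathrm{int}\,K$ forces $K=\bigwedge^k\mathbb{R}^d$, and knowing only that the cone generated by $K$ is everything is strictly weaker (an open ball around the origin already generates the whole space as a cone without being the whole space). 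Nor can you pass from ``$K$ proper'' to ``$W$ proper'' via Corollary \ref{corol15}, since the relevant implication there is itself proved using Proposition \ref{prop13}; that would be circular.

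Your first, abandoned sketch is essentially the paper's argument, but as stated it is incomplete. A single regular $h$ expands only the top eigendirection $e_{I_0}$; the other eigenvalues $\lambda_{i_1}\cdots\lambda_{i_k}$ need not exceed $1$ (the smallest one equals $(\lambda_1\cdots\lambda_{d-k})^{-1}<1$), so from one $h$ you obtain only one full line inside $K$, which does not contradict the properness of a convex set. The paper completes the argument by taking an open subset $V$ of the transitivity set $C_k^0$, so that $\pi^{-1}(\varphi(V))$ is open in the set of decomposables and hence, by Lemma \ref{theo4}, contains a basis $b_1,\ldots,b_r$ of $\bigwedge^k\mathbb{R}^d$; each $b_i$ is the attractor eigenvector of its own $h_i\in\mathrm{reg}(S)$ with eigenvalue $\lambda_{1i}\cdots\lambda_{ki}>1$, so iterating $h_i$ sends $\pm\alpha b_i\in\mathrm{int}\,K$ to infinity along $\pm b_i$, and convexity together with $0\in K$ then forces $K$ to contain the full double ray through each $b_i$, hence $K=\bigwedge^k\mathbb{R}^d$, contradicting properness of $K$. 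You allude to exactly this step (``regular elements whose attractor is an arbitrary point of $C_k^0$'') but do not carry it out, and instead discard it in favor of the invalid shortcut; restoring and completing that first argument is what is needed.
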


\begin{proof}
For each $h\in {\rm reg}(S)$ denote by $b_k(h)$ the attractor of $h$ in $G_k$. The set of transitivity of $C_k$,  $C_k^0$, satisfies
$$C_k^0=\{b_k(h);h\in {\rm reg}(S)\},$$
and has nonempty interior. In particular, there is an open set $V\subset C_k^0=\{b_k(h);h\in {\rm reg}(S)\}$. As a consequence, $\phi(V)$ is an open set in $\mathcal{D}$, and therefore, by Lemma \ref{theo4}, $\pi^{-1}(\phi(V))$ contains a basis $\{b_1,b_2, \ldots ,b_n\}$ of the exterior space. Since $b_i \in \pi^{-1}(\phi(V))$ and $V$ is a subset of $C_k^0=\{b_k(h);h\in  {\rm reg}(S)\}$, then, for each $b_i$ exists  $h_i\in {\rm reg}(S)$ such that $b_i\in\pi^{-1}(\phi(b_k(h_i)))$ or, equivalently, there is a basis $\{e_1(h_i),e_2(h_i), \ldots ,e_d(h_i)\}$ of $\mathbb{R}^d$ where $h_i$ is written as ${\rm diag}(\lambda_{1i},\lambda_{2i}, \ldots ,\lambda_{di})$ and $b_i=e_1(h_i)\wedge e_2(h_i)\wedge \cdots \wedge e_k(h_i)=e_I(h_i)$ with $I=\{1, \ldots ,k\}$.
 So, if we suppose that $0\in\text{int}K$, then there are $\alpha\neq0$ and $h_1,\ldots,h_r\in\text{reg}(S)$ with $r={{d}\choose{k}}$, such that $\alpha e_I(h_i)$ is a basis of $\bigwedge^k\mathbb{R}^d$ with $\pm\alpha e_I(h_i)$ contained in $\text{int}K$, $i=1,\ldots,r$.

But for all $m\in\mathbb{N}$ and $i\in\{1,\ldots,r\}$ we have $h_i^m(\pm\alpha e_I(h_i))\in K$ due to $S$-invariance of $K$. Moreover,
$$\|h_i^m(\pm\alpha e_I(h_i))\|=|\alpha|(\lambda_{1i}\cdots\lambda_{ki})^m\|e_I(h_i)\|\rightarrow+\infty,$$
then the convexity of $K$ implies that $K=\bigwedge^k\mathbb{R}^d$.
\end{proof}

The above proposition has the following consequence.

\begin{corollary}\label{corol15}
Let $K\subset\bigwedge^k\mathbb{R}^d$ be an $S$-invariant convex set and denote by $W$ the closed cone generated by $K$. The following statements are equivalents: 
\begin{itemize}
	\item[i)] $W$ is proper;
	\item[ii)] $K$ is proper.
	\item[iii)] $0\notin {\rm int}K$.
\end{itemize}
\end{corollary}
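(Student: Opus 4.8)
The plan is to establish the three equivalences by proving the cyclic chain $i) \Rightarrow ii) \Rightarrow iii) \Rightarrow i)$, leaning on the two propositions just proved and on the elementary containment $K \subset W$.

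First I would prove $i) \Rightarrow ii)$ by contraposition. Suppose $K$ is not proper, i.e.\ $K = \bigwedge^k\mathbb{R}^d$. Then the closed cone generated by $K$ is trivially all of $\bigwedge^k\mathbb{R}^d$, so $W$ is not proper. (One should note here that $K$ being ``proper'' must be read as ``neither trivial nor the whole space''; since $K$ is $S$-invariant convex, Proposition \ref{prop13} already rules out interesting degeneracies, but the only way $K$ can fail to be proper and still be convex and nonempty with the relevant structure is $K = \bigwedge^k\mathbb{R}^d$, and that case is what we must exclude. If $K = \{0\}$ we may either exclude it by convention or observe it generates only the trivial cone.)

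Next, $ii) \Rightarrow iii)$ is immediate from Proposition \ref{prop13}: if $K$ is a proper $S$-invariant convex set, then $0 \notin \mathrm{int}\,K$.

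Finally, for $iii) \Rightarrow i)$, again argue by contraposition: suppose $W$ is not proper, so $W = \bigwedge^k\mathbb{R}^d$ (it is certainly nontrivial since $K \neq \{0\}$, using Proposition \ref{lemdec} or the hypotheses). By Proposition \ref{pro13} the cone $W$ is $S$-invariant, and by Proposition \ref{lem7} an $S$-invariant cone is pointed and generating — but a pointed cone cannot equal the whole space unless the space is $\{0\}$. Hence the only way $W$ fails to be proper is $W = \bigwedge^k\mathbb{R}^d$; I must then show this forces $0 \in \mathrm{int}\,K$. Here I would use the description of $W$ from the preceding lemma, $W = \mathrm{cl}\big(\bigcup_{\alpha > 0}\alpha K\big)$, together with the fact that $W$ generating means $\mathrm{int}\,W \neq \emptyset$; picking a basis of $\bigwedge^k\mathbb{R}^d$ inside $\bigcup_{\alpha>0}\alpha K$ (which exists because $W = \bigwedge^k\mathbb{R}^d$ is not contained in any proper subspace, by Lemma \ref{dois}), rescaling each basis vector into $K$, and then using the $S$-invariance of $K$ to also pull in the images under suitable regular elements $h_i^m$ — exactly as in the proof of Proposition \ref{prop13} — one gets $\pm$ a large multiple of each basis vector inside $K$, so convexity of $K$ forces $0 \in \mathrm{int}\,K$ (indeed $K = \bigwedge^k\mathbb{R}^d$).

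The main obstacle is the last implication: one has to be careful that ``$W$ not proper'' genuinely reduces to ``$W$ is the whole space'' (ruling out $W = \{0\}$ and $W$ a proper subspace, via Propositions \ref{lemdec} and \ref{lem7}), and then to reconstruct the orthant-blow-up argument of Proposition \ref{prop13} from the generating property of $W$ rather than from a direct hypothesis on $K$. Everything else is bookkeeping.
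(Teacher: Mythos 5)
Your cyclic scheme $i)\Rightarrow ii)\Rightarrow iii)\Rightarrow i)$ is the same as the paper's, and the first two implications are fine ($K\subset W$ gives the first, Proposition \ref{prop13} the second). The genuine gap is in $iii)\Rightarrow i)$. You correctly reduce to showing that $W=\bigwedge^k\mathbb{R}^d$ forces $0\in\mathrm{int}\,K$, and you correctly extract a basis $\{b_1,\ldots,b_n\}\subset\bigcup_{\alpha>0}\alpha K$, so that some positive multiple of each $b_i$ lies in $K$. But the next step --- ``using the $S$-invariance of $K$ to also pull in the images under suitable regular elements $h_i^m$ \ldots one gets $\pm$ a large multiple of each basis vector inside $K$'' --- does not work. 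The $h_i^m$ act linearly and map $K$ into $K$, so they can only produce further \emph{positive} multiples of eigenvectors already in $K$; they cannot flip signs. In Proposition \ref{prop13} the $\pm$ comes from the hypothesis $0\in\mathrm{int}\,K$ (a ball around $0$ contains both $\alpha e_I$ and $-\alpha e_I$), which is exactly what you are trying to prove here, so borrowing that step is circular. And without the minus signs the conclusion fails: the positive orthant is convex, contains arbitrarily large positive multiples of every standard basis vector, and still has $0\notin\mathrm{int}$ of it.

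The paper closes this implication with a purely convex-geometric argument that needs no group action at all: first, $\mathrm{int}\,K\neq\emptyset$ (otherwise $K$ lies in a proper affine subspace $V+u_0$, whence $W\subset V+[0,+\infty)u_0$ could not be the whole space); then, since $\bigcup_{\alpha>0}\alpha K$ is dense in $W=\bigwedge^k\mathbb{R}^d$, it meets the nonempty open set $-\mathrm{int}\,K$, giving $\alpha>0$ and $k\in K$ with $-\alpha k\in\mathrm{int}\,K$; finally the segment from $-\alpha k$ to $k$ lies in $\mathrm{int}\,K$ except possibly at the endpoint $k$ and passes through $0$, so $0\in\mathrm{int}\,K$. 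You should replace your blow-up argument with something of this kind. A minor additional caution: your aside invoking Proposition \ref{lem7} to conclude pointedness applies only to \emph{proper} invariant cones, so it tells you nothing about the case $W=\bigwedge^k\mathbb{R}^d$; the reduction of ``$W$ not proper'' to ``$W$ is the whole space'' is just the definition of proper and needs no lemma.
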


\begin{proof}
	The implication $(i)\Rightarrow(ii)$ holds because $K\subset W$. Moreover, $(ii)\Rightarrow(iii)$ follows by Proposition \ref{prop13}. Finally, to prove that  $(iii)\Rightarrow(i)$ we first note that if  $W=\bigwedge^k\mathbb{R}^d$ then  $\text{int}K\neq\emptyset$. In fact, if  $\text{int}K= \emptyset$ then  $K$ is contained in a proper affine subspace $V+u_0$, where $V\subset\bigwedge^k\mathbb{R}^d$ is a proper vector subspace and $u_0\in\bigwedge^k\mathbb{R}^d$. Hence
	\begin{eqnarray*}
		\bigwedge^k\mathbb{R}^d&=&W={\rm cl}(\displaystyle\bigcup_{\alpha>0}\alpha K)\subset{\rm cl}(\displaystyle\bigcup_{\alpha>0}\alpha(V+u_0))={\rm cl}(\displaystyle\bigcup_{\alpha>0}(V+\alpha u_0))\\
		&=&V+[0,+\infty)u_0\not\subseteq\bigwedge^k\mathbb{R}^d.
	\end{eqnarray*}
	which is a contradiction. Hence, given the open set $-\text{int}K$, there are $\alpha>0$ and $k\in K$ with $\alpha k\in-\text{int}K$, that is, $-\alpha k\in\text{int}K$. Since $K$ is convex, the line $[-\alpha k,k):=\{(t-1)\alpha k+tk;\ t\in[0,1)\}$ is contained in $\text{int}K$, therefore $0\in\text{int}K$.
\end{proof}

The next result presents a synthesis of this section, the relation among invariant convex set, invariant cone and flag type.

\begin{theorem}
	Let $S\subset {\rm Sl}(d,\mathbb{R})$ a semigroup with nonempty interior. Then the following statements are equivalents:
	\begin{itemize}
		\item[i)] There exists an $S$-invariant	proper convex set in $\bigwedge^k\mathbb{R}^d$;
		\item[ii)] There exists an $S$-invariant proper closed cone in $\bigwedge^k\mathbb{R}^d$;
		\item[iii)] $k\in\Theta(S)$.
	\end{itemize}
\end{theorem}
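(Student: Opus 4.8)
The plan is to prove the cycle of implications $(iii)\Rightarrow(ii)\Rightarrow(i)\Rightarrow(iii)$, since most of the work has already been done in the earlier sections and only needs to be assembled. The implication $(iii)\Rightarrow(ii)$ is exactly Theorem \ref{theo5} together with the observation (already recorded after the remark following Theorem \ref{theo5}) that the cone produced there is proper: since $k\in\Theta(S)$ forces $S$ to be a proper subsemigroup, the invariant control set $C_k$ is a proper subset of $\mathbb{G}_k(d)$, so the cone $W$ generated by $M_1=\pi^{-1}(\varphi(C_k))\cap M$ cannot contain all of $\mathcal{D}$ and hence is proper.

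Next, $(ii)\Rightarrow(i)$ is immediate: a proper closed convex cone is in particular a proper $S$-invariant convex set, so there is nothing to prove. The only genuinely new content is $(i)\Rightarrow(iii)$. Here I would start from an $S$-invariant proper convex set $K\subset\bigwedge^k\mathbb{R}^d$ and pass to the closed cone $W$ generated by $K$. By Proposition \ref{pro13}, $W$ is $S$-invariant, and by Corollary \ref{corol15} (the implication $(ii)\Rightarrow(i)$ there, i.e. $K$ proper $\Rightarrow W$ proper) the cone $W$ is proper and nontrivial, because $K\neq\emptyset$ gives $W\neq\{0\}$. Thus $W$ is an $S$-invariant proper nontrivial closed convex cone in $\bigwedge^k\mathbb{R}^d$, and Theorem \ref{teo10} applies to give $k\in\Theta(S)$.

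Putting these three implications together closes the cycle and proves the equivalence of $(i)$, $(ii)$, $(iii)$. I expect no serious obstacle: the main point to be careful about is the bookkeeping on ``proper'' versus ``nontrivial''—one must check that the convex set $K$ being proper and nonempty is exactly what is needed to invoke Corollary \ref{corol15} and conclude that $W$ is a proper nontrivial cone (so that Theorem \ref{teo10}, whose hypothesis is $\{0\}\neq W$ a proper cone, is legitimately applicable). The case $K=\emptyset$ or $K$ a single point should be dismissed at the outset as not being a proper convex set in the intended sense, or handled by noting $W=\{0\}$ is then excluded by the standing convention that cones are nontrivial. Once this is sorted out, the proof is a two-line citation chain.
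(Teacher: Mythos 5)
Your proposal is correct and uses exactly the same ingredients as the paper's proof (Theorem \ref{theo5}, Proposition \ref{pro13}, Corollary \ref{corol15}, Theorem \ref{teo10}); the only difference is the orientation of the cycle of implications, the paper proving $(i)\Rightarrow(ii)\Rightarrow(iii)\Rightarrow(i)$ while you prove $(iii)\Rightarrow(ii)\Rightarrow(i)\Rightarrow(iii)$, which is an immaterial reshuffling. Your extra care about ``proper'' versus ``nontrivial'' is sensible but already covered by the paper's standing convention that cones are proper and nontrivial unless stated otherwise.
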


\begin{proof}
	By Proposition \ref{pro13} and Corollary \ref{corol15} we have that  $(i)\Rightarrow(ii)$. By Theorem \ref{teo10} it follows that $(ii)$ implies $(iii)$. Moreover, since a cone is a convex set, the implication $(iii)\Rightarrow(i)$ follows by Theorem \ref{theo5}.
\end{proof}

\section{Examples}

In order to present  examples to illustrate our results, we create a computational implementation in Julia Language \cite{BEK} called \texttt{LieAlgebraRankCondition.jl}\footnote{Available in \url{https://github.com/evcastelani/LieAlgebraRankCondition.jl}}. The basic idea of this implementation is the following: given the bilinear control system $$\dot{x}=Ax+uBx,x\in \mathbb{R}^{4}\setminus \{0\},u\in \mathbb{R} \mbox{ and } A,B \in \mathfrak{sl}(4,{\Bbb R})$$ put the Lie brackets in a convenient way and analyse all the possibilities until get, if possible, a linearly independent (L.I.) set for $\mathfrak{sl}(4,{\Bbb R})$. In the following we describe a conceptual algorithm.

\begin{algorithm}[H]
	\KwData{$A$: Array, $B$: Array, \texttt{dim}: dimension of $\mathfrak{sl}(4,{\Bbb R})$}
	\KwResult{\texttt{True}: a set of L. I. arrays were found; \texttt{False}: Does not exists an L. I. set of arrays.}
	$C \leftarrow \{A,B, [A,B]\}$\;
	\eIf{ $C$ is L. I.}
	{ $k\leftarrow 3$\;}
	{return \texttt{False}\;} 
	\While{$k\leq$ \texttt{dim}}{
		$j \leftarrow k-1$\;
		$C_{trial} \leftarrow C_j$\;
		\While{($C\cup [C_{trial},C_k]$ is not L.I) and ($j>3$)}{
			$j \leftarrow j-1$ \;
			$C_{trial}\leftarrow C_j$\;
		}
		\eIf{j=3}{
			remove $C_k$ from $C$ \;
			$k \leftarrow k-1$\;
		}
		{	
			add $[C_{trial},C_k]$ to $C$\;
			$k\leftarrow k+1$\;
		}
		\If {k=3}{
			return \texttt{False}\;
		}
	}
	return \texttt{True}\;
	\caption{Lie Algebra Rank Condition Algorithm.}
	\label{alg:alg1}
\end{algorithm}

\begin{remark}
	The parameter \texttt{dim} can be changed in order to find solutions for higher order spaces. 
\end{remark}

\begin{example}
	Consider the bilinear system
	$$ (\Sigma) \,\,\,\, \dot{x}=Ax+uBx, \mbox{ with } x\in \mathbb{R}^{4}\setminus \{0\} ,  u\in \mathbb{R} , $$
		$$A= \left[
	\begin{array}{cccc}
	0 & 2 & 0 & -1\\
	2 & 0 & 2 & 0\\
	0 & 2 & 0 & 2\\
	-1 & 0 & 2 & 0\\
	\end{array}
	\right]\mbox{ and } B={\rm diag}(4,1,-2,-3) \in \mathfrak{sl}(4,\mathbb{R}). $$

	The matrix $A$ has the distinct eigenvalues, $3,2,-2,-3$, with the following eigenvectors $v_1=(1,2,2,1)$, 
	$v_2=(-2,-1,1,2)$, $v_3=(2,-1,-1,2)$ and $v_4=(-1,2,-2,1)$, respectively.
	Let $S$ be the semigroup of $(\Sigma)$, that is,
	$$S=\{e^{t_1(A+u_1B)}\cdots e^{t_k(A+u_nB)};t_1,\ldots,t_n \geq0,\ n\in\mathbb{N}\}.$$
	Using the implementation of Algorithm \ref{alg:alg1}, we can show that this system satisfies the Lie algebra rank condition, hence $S$ has nonempty interior in  ${\rm Sl}(4, {\Bbb R})$. Moreover, $S$ is a proper semigroup. In fact, by 
	\cite[Proposition 2]{SM2}, we have
	$$A+uB\in\mathcal{L}(S_2)=\{X\in\mathfrak{sl}(4,\mathbb{R});\ \exp(X)\in S_2\},$$
	where $S_2=\{g\in {\rm Sl}(4,\mathbb{R});\ g\mathcal{O}_2\subset\mathcal{O}_2\}$ is the the compression semigroup of the positive orthant $\mathcal{O}_2=\left\{\displaystyle\sum_{I=\{i_1<i_2\}\subset\{1,2,3,4\}}\alpha_Ie_I;\ \alpha_I\geq0\right\}\subset\bigwedge^2\mathbb{R}^4$. 
	This semigroup coincides with the set of all matrix in ${\rm Sl}(4,\mathbb{R})$ such that the minors of order $2$ have non-negative determinant. Note that  $S\subset S_2$. Since $S_2$ leaves invariant the cone $\mathcal{O}_2$, then $S \mathcal{O}_2\subset\mathcal{O}_2$. Hence $(\Sigma)$ is not controllable and therefore $S$ is proper, in particular $S$ leaves invariant the positive orthant of  $\bigwedge^2\mathbb{R}^4$.
	
	On the other hand,  neither $\pm A$ nor $\pm(A+uB)$ leave invariant an orthant of $\mathbb{R}^4$.	In fact, by \cite[Lemma 1]{YSac}, a matrix $X=(x_{ij})$ leaves invariant the orthant with signs $(\sigma_1,\ldots,\sigma_d)$ if and only if $\sigma_i\sigma_jx_{ij}>0$. Applying this condition	to $\pm A$, $\pm(A+uB)$, we get  the contradictory fact that $\sigma_1\sigma_4$ must be simultaneously $1$ and $-1$, so that there are no invariant orthants in $\mathbb{R}^4 =\bigwedge^1\mathbb{R}^4$. The  system $(\Sigma)$ is a counter-example for the following conjecture proposed by  Sachkov in \cite{YSac}. 	
	Consider  a bilinear control system with $A$ symmetric and $B={\rm diag}(b_1,\ldots,b_n)$ where $b_i\neq b_j$ for $i\neq j$. Is it true that if this system has no invariant orthants and everywhere satisfies the necessary Lie algebra rank controllability condition, then it is controllable in $\mathbb{R}^d\setminus\{0\}$?
	
	Now we prove that, although  $(\Sigma)$ is not controllable, there are no  $S$-invariant cones in $\mathbb{R}^4 =\bigwedge^1\mathbb{R}^4$ neither in $\bigwedge^3\mathbb{R}^4$.
	Suppose that $W\subset\mathbb{R}^4$ is an $S$-invariant cone. Then $W$ has nonempty interior and it is not contained in the plane generated by $\{e_2,e_3,e_4\}$. Therefore there is a vector $w=(w_1,w_2,w_3,w_4)\in W$ such that $w_1\neq 0$. Since $e^{tB} \in {\rm cl(S)}$  for all $t \in \mathbb{R}$, then if $w_1>0$ we have that
	$$\lim_{t\rightarrow+\infty}\frac{e^{tB}w}{\Vert e^{tB}w\Vert}=e_1\in W.$$
	If $w_1<0$ then
	$$\lim_{t\rightarrow+\infty}\frac{e^{tB}w}{\Vert e^{tB}w\Vert}=-e_1\in W.$$
	Without loss of generality, assume that $e_1\in W$. Knowing that $v_1$ is the attractor eigenvalue of $A$ and considering the  basis $\{v_1,v_2,v_3,v_4\}$, a similar argument  assures that either $v_1\in V$ or $-v_1\in V$.
	
	Let $H:=\{(x_1,x_2,x_3,x_4)\in\mathbb{R}^4:x_4<0\}$, then, for all $x\in H$,
	$$\lim_{t\rightarrow+\infty}\dfrac{e^{t(-B)}x}{||e^{t(-B)}x||}=-e_4.$$
	
	In particular, note that if $W\cap H\neq\emptyset$, then $-e_4\in W$. Now we show  that $W\cap H\neq\emptyset$. Since the inner product between $Ae_1$ and $e_4$ is negative, then the curve $t\mapsto e^{tA}e_1$ intersects $H$ for  $t>0$. By $S$-invariance and knowing that $e_1\in W$, we have $e^{\mathbb{R}_+A}e_1\subset W$ , then $W\cap H\neq\emptyset$. 
	
	As stated early, either $v_1\in W$ or $-v_1\in W$. As $v_1$ has a positive fourth coordinate, then
	$$\lim_{t\rightarrow+\infty}\dfrac{e^{t(-B)}v_1}{||e^{t(-B)}v_1||}=e_4,$$
	and as $-v_1$ has a negative first coordinate, we have
	$$\lim_{t\rightarrow+\infty}\dfrac{e^{tB}(-v_1)}{||e^{tB}(-v_1)||}=-e_1.$$
	
	Hence if $v_1\in W$ then $e_4\in W$. But $-e_4$ is also in $W$, then $W$ is not pointed. On the other hand, if $-v_1\in W$, then $-e_1\in W$. Analogously, since $e_1$ is also in $W$, then $W$ is not pointed also in his case. Anyway $W$ is not pointed, but this contradicts Proposition \ref{lem7}.
	
	Since $W$ is arbitrary, we conclude that $(\Sigma)$ does not have   invariant cones in $\mathbb{R}^4 =\bigwedge^1\mathbb{R}$.
	
	Now in the case of $\bigwedge^3\mathbb{R}^4$, we  recall that  $S$ has  invariant cones in $\bigwedge^3\mathbb{R}^4$ if, and only if, $S^{-1}$ has  invariant cones in $\mathbb{R}^4$, and the linear isomorphism from $\mathbb{R}^4$ to $\bigwedge^3\mathbb{R}^4$ (that preserves basis) is also a one to one correspondence between the respective invariant cones  (see e.g. \cite{SM2}).
	
	Therefore, it is enough to prove that $S^{-1}$ does not leave  invariant cones in $\mathbb{R}^4$. Since $S$ is generated by the exponential of the elements of $\{A+uB;\ u\in\mathbb{R}\}\subset\mathfrak{sl}(4,\mathbb{R})$, then $S^{-1}$ is generated by the exponential of the elements $-A+uB$ with $u\in\mathbb{R}\}$. Then $S^{-1}$ is also the semigroup of the bilinear control system $\dot{x}=-Ax+uBx$ with $x\in \mathbb{R}^{4}\setminus \{0\}$ and $u\in \mathbb{R}$.
	
	 Let $W\neq\{0\}$ be an $S^{-1}$-invariant cone. Note that $S^{-1}$ has nonempty interior in ${\rm Sl}(4,\mathbb{R})$. Therefore,  $e_1\in W$ or $-e_1\in W$. Without loss of generality, we assume $e_1\in W$. Since the highest eigenvalue of $-A$ is $3$ and the corresponding eigenvector is $v_4$, then  $v_4\in W$ or $-v_4\in W$. Furthermore, the inner product between $-Ae_1$ and $e_4$ is positive, and, therefore, $e_4\in W$. If $v_4\in W$, then
	$\displaystyle\lim_{t\rightarrow +\infty}\dfrac{e^{tB}v_4}{||e^{tB}v_4||}=-e_1\in W$ and
	$W$ is not pointed, because $e_1,-e_1\in W$. Otherwise, if $-v_4\in W$, then
	$\displaystyle\lim_{t\rightarrow+\infty}\dfrac{e^{t(-B)}(-v_4)}{|| e^{t(-B)}(-v_4)||}=-e_4$
	and $W$ is still not pointed, because $e_4,-e_4\in W$.
	Since $W$ is not pointed in both cases, by Proposition \ref{lem7} we have a contradiction. We conclude that  the proper semigroup $S$ does not leave  invariant a proper cone in $\bigwedge^1\mathbb{R}^4$ neither in $\bigwedge^3\mathbb{R}^4$ but  $S$ has an invariant cone in $\bigwedge^2\mathbb{R}^4$ (in fact, we showed that it leaves invariant the positive orthant of that space). Then by Theorem \ref{controllab}, the system $(\Sigma)$ is not controllable. Moreover, Theorem \ref{teo10} implies that $S$ has parabolic type $\Theta(S)=\{2\}$, in other words, $\mathbb{F}_{\Theta(S)}=\mathbb{G}_2(4)$.
	
\end{example}

\begin{example}
	
	\noindent Consider the above bilinear control system, but with 
	$$A:= \left[
	\begin{array}{cccc}
	1&1&0&0\\
	-1&1&0&0\\
	0&0&-1&\frac{1}{2}\\
	0&0&-\frac{1}{2}&-1
	\end{array}
	\right],\ B:=\left[
	\begin{array}{cccc}
	2&0&0&0\\
	0&-\frac{3}{2}&-\frac{1}{10}&0\\
	0&\frac{1}{10}&-\frac{3}{2}&0\\
	0&0&0&1
	\end{array}
	\right]$$
	and denote the system semigroup by $S$. Using again the implementation of Algorithm \ref{alg:alg1}, we can see that $S$ satisfies the Lie algebra rank condition, so ${\rm int}S\neq\emptyset$. Now we show that $S$ does not have invariant cones in ${\bigwedge^{1}\mathbb{R}^4},{\bigwedge^{2}\mathbb{R}^4}$ or ${\bigwedge^{3}\mathbb{R}^4}$ and therefore  $S = {\rm Sl}(4,{\Bbb R})$.\\
	First note that 
	$$e^{\frac{\pi}{2}A}=\left[
	\begin{array}{cccc}
	0&d&0&0\\
	-d&0&0&0\\
	0&0&\frac{1}{d}\frac{\sqrt{2}}{2}&\frac{1}{d}\frac{\sqrt{2}}{2}\\
	0&0&-\frac{1}{d}\frac{\sqrt{2}}{2}&\frac{1}{d}\frac{\sqrt{2}}{2}
	\end{array}
	\right]$$
	with $e^{\frac{\pi}{2}}=d$.
	
	Now we compute $e^{\frac{\pi}{2}A}$ in the canonical basis of ${\bigwedge^{3}\mathbb{R}^4}$. 
	$$e^{\frac{\pi}{2}A}(e_1\wedge e_2\wedge e_3)=d\frac{\sqrt2}{2}e_1\wedge e_2\wedge e_3-d\frac{\sqrt 2}{2}e_1\wedge e_2\wedge e_4 , $$
	$$e^{\frac{\pi}{2}A}(e_1\wedge e_2\wedge e_4)=d\frac{\sqrt2}{2}e_1\wedge e_2\wedge e_3+d\frac{\sqrt2}{2}e_1\wedge e_2\wedge e_4 , $$
	$$e^{\frac{\pi}{2}A}(e_1\wedge e_3\wedge e_4)=-\frac{1}{d}e_2\wedge e_3\wedge e_4$$
	and
	$$e^{\frac{\pi}{2}A}(e_2\wedge e_3\wedge e_4)=\frac{1}{d}e_1\wedge e_3\wedge e_4 . $$
	Then $e^{\frac{\pi}{2}A}$ can be written, with respect to the canonical basis of ${\bigwedge^{3}\mathbb{R}^4}$, as
	$$\left[
	\begin{array}{cccc}
	d\frac{\sqrt{2}}{2}&d\frac{\sqrt{2}}{2}&0&0\\
	-d\frac{\sqrt{2}}{2}&d\frac{\sqrt{2}}{2}&0&0\\
	0&0&0&\frac{1}{d}\\
	0&0&-\frac{1}{d}&0
	\end{array}
	\right]= \left[
	\begin{array}{cc}
	d I&0\\
	0&\frac{1}{d}I
	\end{array}
	\right]\left[
	\begin{array}{cc}
	R_1&0\\
	0&R_2
	\end{array}
	\right]
	$$
	with $R_1,R_2$ rotations by angles different from $0$ and $\pi$.
	
	In the next lemma we prove that the   cones in  $\mathbb{R}^4$, which are invariant by above matrix, are subspaces. 
	\begin{lemma}
		Let $T\in {\rm Sl}(4,{\Bbb R})$ be the matrix 
		$$
		T=\left[
		\begin{array}{cc}
		d I & 0\\
		0 & \frac{1}{d}I
		\end{array}
		\right]\left[
		\begin{array}{cc}
		R_1 & 0\\
		0 & R_2
		\end{array}
		\right]$$
		where $R_1 , R_2 \in {\rm SO}(2,{\Bbb R}) \setminus \{I,-I\}$, $I$ is $(2 \times 2)$-identity matrix and $d \in \mathbb{R} \setminus \{0\}$. If $W$ is a $T$-invariant cone in $\mathbb{R}^4$   then $W$ is a subspace.
	\end{lemma}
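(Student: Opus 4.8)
The plan is to classify \emph{all} closed convex cones $W\subseteq\mathbb{R}^4$ with $TW\subseteq W$ and observe that each is a subspace; precisely, I will show $W\in\{\{0\},\,U_1,\,U_2,\,\mathbb{R}^4\}$, where $U_1:=\mathbb{R}^2\times\{0\}$ and $U_2:=\{0\}\times\mathbb{R}^2$. First I would make two harmless reductions. Replacing $(d,R_1,R_2)$ by $(-d,-R_1,-R_2)$ leaves $T$ unchanged and keeps $-R_i\in\mathrm{SO}(2,\mathbb{R})\setminus\{I,-I\}$, so we may assume $d>0$; and conjugating $T$ by the block transposition $\sigma(v,w)=(w,v)$ gives $\sigma T\sigma=\mathrm{diag}(d^{-1}R_2,\,dR_1)$, which carries $T$-invariant cones to $(\sigma T\sigma)$-invariant cones and preserves being a subspace, so we may also assume $d\ge1$. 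Writing $x=(v,w)$ in $\mathbb{R}^4=U_1\oplus U_2$, we have $T^n x=(d^n R_1^n v,\ d^{-n}R_2^n w)$.

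Assume first $d>1$. The key point is an attractor-type argument that uses that $W$ is a \emph{closed} cone: for $x=(v,w)\in W$ the rescaled vector $d^{-n}T^n x=(R_1^n v,\ d^{-2n}R_2^n w)$ again lies in $W$, its $U_2$-component tends to $0$, and along any subsequence on which $R_1^n v$ converges (these exist, the orbit lying on a fixed circle) the limit, which has the form $(y,0)$, lies in $W$. Hence $W\cap U_1$ contains every subsequential limit $y$ of $(R_1^n v)$: if $R_1$ has infinite order this is the whole circle of radius $\|v\|$, and if $R_1$ has finite order $m$ — necessarily $m\ge3$, since $R_1\ne\pm I$ — it is the set of vertices of a regular $m$-gon inscribed in that circle; in both cases, when $v\ne0$ the closed convex cone generated by this set is all of $U_1$. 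So: if $W$ contains some $(v,w)$ with $v\ne0$, then $U_1\subseteq W$. Inside the $T$-invariant plane $U_2$, on which $T$ acts as $d^{-1}R_2$, the analogous (and simpler) argument, applied to $d^n T^n(0,w)=(0,R_2^n w)\in W$, shows that if $W$ meets $U_2\setminus\{0\}$ then $U_2\subseteq W$. Putting these together: if some element of $W$ has nonzero $U_1$-part then $U_1\subseteq W$, and then either $W=U_1$, or $W$ also meets $U_2\setminus\{0\}$ (subtract the $U_1$-part, legitimate since the subspace $U_1$ is contained in $W$), which forces $U_2\subseteq W$ and hence $W=\mathbb{R}^4$; otherwise $W\subseteq U_2$ and the $U_2$-argument gives $W=\{0\}$ or $W=U_2$. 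In every case $W$ is a subspace.

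The remaining case $d=1$, i.e. $T\in\mathrm{SO}(4,\mathbb{R})$, is the main obstacle, since there is no longer a dominant block to isolate by rescaling. Here I would exploit compactness: $G:=\overline{\{T^n:n\ge0\}}$ is a closed — hence compact, and abelian — subgroup of $\mathrm{SO}(4,\mathbb{R})$ (a closed sub-semigroup of a compact group is a group), and since $W$ is closed it is $g$-invariant for every $g\in G$; as $g^{-1}\in G$ as well, this gives $gW=W$, so $W$ is invariant under the \emph{group} $G$. Now fix $x\in W$ and let $\mu$ be normalized Haar measure on $G$; the orbit $Gx$, hence its closed convex conic hull $C$, lies in $W$. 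If $C$ were not a subspace, there would be $\ell\ne0$ with $\ell\ge0$ on $C$ and $\ell\not\equiv0$ on $C$; but then $0=\ell\!\big(\textstyle\int_G gx\,d\mu(g)\big)=\int_G\ell(gx)\,d\mu(g)$ with a nonnegative continuous integrand on $G=\mathrm{supp}\,\mu$ forces $\ell(gx)=0$ for all $g\in G$, hence $\ell\equiv0$ on $C$, a contradiction — here I used that $\int_G gx\,d\mu(g)$ is $G$-fixed and that $G$ (already $T$, as $R_1,R_2\ne I$) has no nonzero fixed vector, so that average is $0$. Thus $C$ is a subspace, so $-x\in C\subseteq W$; since $x\in W$ was arbitrary, $W=-W$ is a subspace. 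Underlying both cases is the elementary but essential fact — precisely where the hypothesis $R_i\ne\pm I$ is used — that the orbit of a nonzero vector under such a rotation is never contained in a closed half-plane through the origin, so its conic hull is the whole plane; getting the $d=1$ case to yield to this fact, via the passage to the compact closure group, is the step I expect to require the most care.
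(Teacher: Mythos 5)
Your proof is correct, and for the dominant case it follows the same strategy as the paper: split $\mathbb{R}^4$ into the two $T$-invariant planes, show that a rotation-by-a-nontrivial-angle admits no invariant closed convex cone in the plane other than $\{0\}$ and the whole plane, and use the expansion/contraction of the two blocks to push any mixed vector's rescaled orbit onto the expanding plane. Two differences are worth noting. First, you actually prove the two-dimensional key fact (the closed conic hull of the $R^n v$-orbit, $v\neq 0$, is the whole plane whether $R$ has infinite order or finite order $m\ge 3$), which the paper only asserts. Second, and more substantially, the paper's proof uses $\Vert T^n u\Vert\to\infty$ and $\Vert T^n w\Vert\to 0$, which silently requires $|d|\neq 1$; this is harmless in the example (there $d=e^{\pi/2}$), but the lemma as stated allows any $d\neq 0$, and for $|d|=1$ the paper's argument gives nothing. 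Your Haar-measure argument for that case --- pass to the compact abelian group $G=\mathrm{cl}\{T^n\}$, average $x\in W$ over $G$ to get the zero vector (no nonzero $T$-fixed vectors since $R_i\neq I$), and conclude by separation that the closed conic hull of $Gx$ is a subspace, hence $W=-W$ --- is a genuinely different mechanism that closes this gap and proves the lemma in the stated generality. Your initial reductions ($d>0$ by $(d,R_1,R_2)\mapsto(-d,-R_1,-R_2)$, then $d\ge 1$ by conjugating with the block swap) are also a cleaner way to organize the case analysis than the paper's, which additionally glosses over why $W\not\subseteq\langle e_1,e_2\rangle$ and $W\not\subseteq\langle e_3,e_4\rangle$ yield a single $v\in W$ with both components nonzero (true for a convex cone, but it needs the one-line argument you effectively supply).
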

	\begin{proof}
		Note that  $\langle e_1 , e_2\rangle$ and $\langle e_3 , e_4 \rangle$ are $T$-invariant spaces, and the restrictions of $T$ to these spaces are  $\alpha R$ where $\alpha>0$ and $R$ is the rotation different from $I$ and $-I$. The only  cones in a two-dimensional space that are invariant by these maps are $(0,0)$ or the whole space, hence if  $W\subset\langle e_1 , e_2\rangle$ then $W=\{0\}$ or $W=\langle e_1 , e_2 \rangle$. If $W\subset \langle e_3,e_4\rangle$ then $W=\{0\}$ or $W=\langle e_3 , e_4 \rangle$. Suppose that $W$ is not contained in these spaces. Then there exists  $v\in W$ such that $v \neq \langle e_1 , e_2 \rangle$ and $v \neq \langle e_3 , e_4 \rangle$. As  $\mathbb{R}^4$ is a direct sum of these two spaces, then $v$ has the unique decomposition  $v=u+w$, with $0 \neq u\in\langle e_1 , e_2 \rangle$ and $0 \neq w \in\langle e_3 , e_4 \rangle$. Knowing the eigenvalues of the restriction of $A$ to $\langle e_1 , e_2 \rangle$ we can show that 
		$$\Vert T^nu\Vert=\Vert dR_1^nu\Vert\rightarrow+\infty \mbox{ and }
		\Vert T^nw|=\Vert(1/d)R_2^nw\Vert\rightarrow0 . $$
		In particular, the distance of $\frac{T^nv}{\Vert T^nv\Vert}$ to $\langle e_1 , e_2 \rangle$ converges to zero, this sequence is contained in a compact set and has a  subsequence  that converges to $p$. Note that  $p\in\langle e_1 , e_2 \rangle$ and  $\Vert p\Vert =1$. As $W$ is a $T$-invariant cone then $p\in {\rm cl}(W)=W$. 
		
		We have also that $W\cap\langle e_1 , e_2 \rangle$ is a $T$-invariant  cone which contains $p$. Then  $W\cap\langle e_1 , e_2\rangle =\langle e_1 , e_2 \rangle$ and so $\langle e_1 , e_2 \rangle\subset W$. It implies that  $-u\in W$, then $w=v+(-u)\in W$ and therefore   $W$ has a non-null element of $\langle e_3 , e_4 \rangle$. In a similar way we can see that  $\langle e_3 , e_4\rangle\subset W$. Hence  $W$ contains  $\langle e_1 , e_2 \rangle$ and $\langle e_3 , e_4 \rangle$, that is, $W=\mathbb{R}^4$. In all cases, $W$ is a subspace of $\mathbb{R}^4$.
	\end{proof}
	
	By the above lemma, any $e^{\frac{\pi}{2}A}$-invariant cone in ${\bigwedge^{1}\mathbb{R}^4}$ or  in ${\bigwedge^{3}\mathbb{R}^4}$, is a subspace. Therefore there are no   $S$-invariant cones in ${\bigwedge^{1}\mathbb{R}^4}$ neither in ${\bigwedge^{3}\mathbb{R}^4}$.

	Now it remains to prove that in ${\bigwedge^{2}\mathbb{R}^4}$ there are no $S$-invariant cones. First note that 
	the following submatrix of $B$,
	$$B_2=\left[
	\begin{array}{cc}
	-\frac{3}{2}&-\frac{1}{10}\\
	\frac{1}{10}&-\frac{3}{2}
	\end{array}
	\right]$$
	satisfies $\displaystyle\lim_{t\rightarrow+\infty}e^{tB_2}=0$ implying that $\displaystyle\lim_{t\rightarrow+\infty}e^{tB}v=0$
	for all $v\in\langle e_2 , e_3 \rangle$. Moreover 
	$e^{tB}e_1=e^{2t}e_1$ and $e^{tB}e_4=e^te_4$.
	
	Note that when  $t\rightarrow+\infty$ we have that 
	$$\frac{e^{tB}(e_1\wedge e_4)}{e^{2t}e^{t}}=\frac{e^{2t} e_1\wedge e^{t}e_4}{e^{2t}e^{t}}=e_1\wedge e_4\rightarrow e_1\wedge e_4$$ and moreover 
	$$\frac{e^{tB}(e_i\wedge e_j)}{e^{2t}e^{t}} \rightarrow 0 \mbox{ for } (i,j)\neq (1,4) . $$
	
	Hence, for any vector  $v \in \bigwedge^2\mathbb{R}^4$ we have
	\begin{eqnarray}\label{vector}
v=\alpha_1 e_1\wedge e_4+\alpha_2 e_1\wedge e_2+\alpha_3 e_1\wedge e_3+\alpha_4 e_4\wedge e_2+\alpha_5 e_4\wedge e_3+\alpha_6 e_2\wedge e_3,
	\end{eqnarray}
	for some $v_{1}, \ldots , v_{4} \in \mathbb{R}$ and
	we have $\displaystyle\lim_{t\rightarrow+\infty}\frac{e^{tB}(v)}{e^{2t}e^{t}}= \alpha_1 e_1\wedge e_4$. Now, suppose that exists an $S$-invariant cone $W$. Then, there is $v\in W$ of the form (\ref{vector}) such that
	$$\lim_{t\rightarrow +\infty}\frac{e^{tB}(v)}{e^{2t}e^{t}}=\alpha e_1\wedge e_4$$
	with $\alpha\neq 0$, because ${\rm int}W\neq\emptyset$.
		
	As  $\alpha e_1\wedge e_4\in W$ and
	$$e^{2\pi A}=\left[
	\begin{array}{cccc}
	d^4&0&0&0\\
	0&d^4&0&0\\
	0&0&-\frac{1}{d^4}&0\\
	0&0&0&-\frac{1}{d^4}
	\end{array}
	\right]$$
	we have that $e^{2\pi A}(\alpha e_1\wedge e_4)=\alpha e_1\wedge -e_4=-\alpha e_1\wedge e_4.$
	As $W$ is invariant by the  $e^{2\pi A}$-action, then $-\alpha e_1\wedge e_4\in W$, hence any straight line generated by  $\alpha e_1\wedge e_4$ is contained in  $W$, that is, $W$ is not pointed. Consequently, ${\bigwedge^{2}\mathbb{R}^4}$ does not have   $S$-invariant cones. Therefore,  by Theorem \ref{controllab},  $S={\rm Sl}(4,{\Bbb R})$, that is, the system is controllable.
\end{example}

{\bf Acknowledgments:} The authors are greatly indebted to Prof. L.A.B. San Martin for suggesting the problem and for many stimulating conversations.

\end{document}